\newtheorem{theorem}{Theorem}[section]
\newtheorem{lemma}[theorem]{Lemma}
\newtheorem{proposition}[theorem]{Proposition}
\newtheorem{corollary}[theorem]{Corollary}
\theoremstyle{definition}
\newtheorem{definition}[theorem]{Definition}
\newcommand{\Fisg}{\text{\textnormal{Fisg}}}
\newcommand{\Iisg}{\text{\textnormal{Iisg}}}
\newcommand{\Pisg}{\text{\textnormal{Pisg}}}
\newcommand{\Tisg}{\text{\textnormal{Tisg}}}
\newcommand{\Dom}{\text{\textnormal{Dom}}}
\newcommand{\Img}{\text{\textnormal{Im}}}
\begin{document}
 \title{\sffamily Graphs and Their Associated Inverse Semigroups}

  \author{
    \textsc{T. Chih and D. Plessas} \\[0.25em]
    }

\maketitle

\begin{abstract}
Directed graphs have long been used to gain understanding of the structure of semigroups, and recently the structure of directed graph semigroups has been investigated resulting in a characterization theorem and an analog of Fruct's Theorem. We investigate four inverse semigroups defined over undirected graphs constructed from the notions of subgraph, vertex induced subgraph, rooted tree induced subgraph, and rooted path induced subgraph. We characterize the structure of the semilattice of idempotents and lattice of ideals of these four inverse semigroups. Finally, we prove a characterization theorem that states that every graph has a unique associated inverse semigroup up to isomorphism.
\end{abstract}

\section{Introduction}\label{Intro}

We will follow the notations of \cite{Bondy} for graph theory, \cite{Stanley} for semilattices and lattices, and \cite{SGT} for inverse semigroups. We will only consider finite undirected graphs, but they are allowed to have multiple edges and loops. We allow $\emptyset$ to be considered a graph without vertices or edges and the empty map $\mu_0:\emptyset \to \emptyset$ to be a valid graph isomorphism.\par
Much of the theory linking semigroups to graphs has been in the guise of directed graphs \cite{Ash, Margolis, Kelarev, Jones, Sieben}. However, undirected graphs have rich internal symmetries for which groups are too coarse an algebraic structure to distinguish. This has lead to notions of distinguishing number \cite{Albertson} and fixing number \cite{Gibbons}. Furthermore, local symmetry in the form of subgraph embeddings has been used famously by Lov{\'a}sz to solve the edge reconstruction conjecture \cite{Harary} for graphs with $n$ vertices and $m$ edges where $m\geq 1/2{n \choose 2}$ \cite{Lovasz}. The algebra of studying local symmetry is an inverse semigroup. This leads us to investigate inverse semigroups on undirected graphs.\par

In section \ref{ISGs}, we begin by defining inverse semigroups associated to undirected graphs to correspond to the ideas of subgraph symmetry, vertex induced subgraph symmetry, tree induced subgraph symmetry, and path induced subgraph symmetry. These four inverse semigroups are linked to three famous conjectures in graph theory: the edge reconstruction conjecture \cite{Harary}, the vertex reconstruction conjecture \cite{BondyS}, and the Lov{\'a}sz conjecture \cite{LovaszC} which states that every vertex transitive graph contains a hamiltonian path.  These inverse semigroups  are graph analogues of the inverse semigroup of sets \cite{WPT} with a necessary restriction of partial monomorphism to partial isomorphism \cite{IndSym}.\par

In section \ref{SemilatticeS} we characterize the semilattice of idempotents for these four inverse semigroups and their relation to the subgraph structure of the associated graph. In section \ref{Ideals} we characterize the ideals of these four inverse semigroups and characterize their ideal lattices. Finally, in section \ref{Characterization} we prove that the inverse semigroup corresponding to all subgraphs of a graph uniquely determines that graph up to isomorphism.\par 

\section{Inverse Semigroups Constructed from Graph Symmetry}\label{ISGs}

We will start with the most general inverse semigroup associated to all subgraphs of a graph.

\begin{definition}
Let $G$ be a graph. We define $\Fisg(G)$ to be the collection of all graph isomorphisms $\varphi:H\to J$ where $H$ and $J$ are subgraphs of $G$.
\end{definition}

We then define composition. For $\psi, \varphi \in \Fisg(G)$ we define $\psi\circ\varphi:\varphi^{-1}(\Dom(\psi))\to \psi(\text{Im}(\varphi))$ to be $\psi\circ\varphi=\psi\circ \varphi|_{\varphi^{-1}(\text{Dom}(\psi))}$ and notice that $\psi\circ\varphi$ is an isomorphism of subgraphs.\par

The composition of $\Fisg(G)$ is associative, and for any subgraph isomorphism $\varphi$, $\varphi\circ\varphi^{-1}\circ\varphi=\varphi$. Hence $\Fisg(G)$ forms an inverse semigroup under composition. As we will see in section \ref{Ideals}, $\Fisg(G)-\text{Aut}(G)$ is an ideal of $\Fisg(G)$. When $G$ is connected, a set of generators of this ideal is the set of identity isomorphisms of edge deleted subgraphs of $G$, highlighting a strong link to the edge reconstruction conjecture.\par

We get an analogous connection for the vertex reconstruction conjecture if we instead consider an inverse semigroup associated to vertex induced subgraphs of $G$.

\begin{definition}
Let $G$ be a graph. We define $\Iisg(G)$ to be the collection of all graph isomorphisms $\varphi:H\to J$ where $H$ and $J$ are vertex induced subgraphs of $G$.
\end{definition}

We then define composition the same as for $\Fisg(G)$, and note that the intersection of two vertex induced subgraphs is a vertex induced subgraph. Hence $\Iisg(G)$ is also an inverse semigroup under composition, and the ideal $\Iisg(G)-\text{Aut}(G)$ has a set of generators of identity isomorphisms of vertex deleted subgraphs of $G$.\par

We now move to two inverse semigroups who are linked to the Lov{\'a}sz conjecture. We would like to consider path induced subgraphs graph, as a graph $G$ has a hamiltonian path if and only if $G$ is a path induced subgraph of $G$. However, there is no natural well defined way to intersect two paths and be guaranteed a path.\par
As the Lov{\'a}sz conjecture relates to vertex transitive graphs, we could consider rooting the paths at any vertex. However, we still run into a problem where the intersection of path induced subgraphs is a non-path tree. For an example of this, given any vertex root of the Petersen graph, two rooted $5$-cycles generated by rooted paths that share the two edges incident to the root in their cycles will have an intersection of a vertex rooted non-path tree.\par

\begin{figure}[h]
\includegraphics[scale=.38]{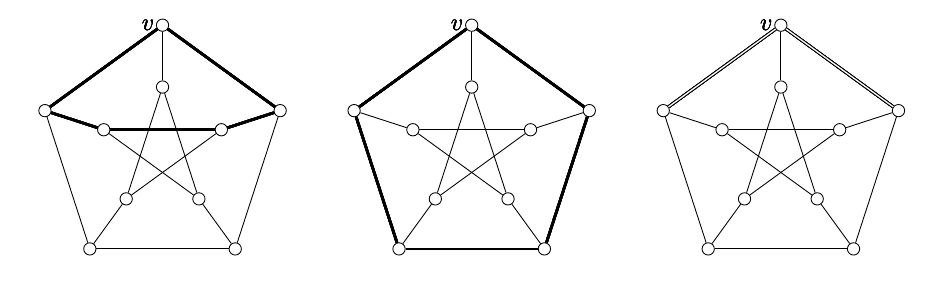}
\caption{Two $v$-rooted path induced cycles of the Petersen Graph have a $v$-rooted non-path tree intersection.}
\end{figure}

So we must either consider vertex rooted tree induced subgraphs or we must consider vertex rooted path induced subgraphs paired with their vertex rooted path. We will consider both, starting with the inverse semigroup associated to vertex rooted tree induced subgraphs.

\begin{definition}
Let $G$ be a graph and $v\in V(G)$. We define $v$-rooted tree induced subgraph $H$ of $G$ to be such that $H=G[V(T)]$ for some tree $T$ of $G$ rooted at $v$.
\end{definition}

\begin{definition}
Let $G$ be a graph and $v\in V(G)$. We define $\Tisg(G,v)$ to be the collection of graph isomorphisms $\varphi:H\to J$ where $\varphi(v)=v$ and $H$ and $J$ are $v$-rooted tree induced subgraphs of $G$. 
\end{definition}

Defining composition is now a bit more technical. For $\varphi, \psi \in \Tisg(G,v)$ we define $\psi\circ\varphi:\varphi^{-1}(C_{\psi\circ\varphi})\to \psi(C_{\psi\circ\varphi})$ where $C_{\psi\circ\varphi}=G[V(C_{\psi\circ\varphi})]$ and
$V(C_{\psi\circ\varphi})$ contains all vertices $u\in V(G)$ such that there is a $vu$ path contained in both $\text{Im}(\varphi)$ and $\text{Dom}(\psi)$, and $\psi\circ\varphi=\psi\circ\varphi{}|_{\varphi^{-1}(C_{\psi\circ\varphi})}$.

\begin{proposition}\label{TisgISG}
Let $G$ be a graph and $v\in V(G)$, then $\Tisg(G,v)$ is an inverse semigroup.
\end{proposition}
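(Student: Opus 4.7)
The plan is to verify the four defining properties of an inverse semigroup: closure under composition, associativity, existence of a generalized inverse for every element, and commutativity of idempotents (the last two together forcing uniqueness of inverses).

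I would first show $\psi\circ\varphi\in\Tisg(G,v)$. By construction, $V(C_{\psi\circ\varphi})$ is the vertex set of the connected component of $v$ in $\Img(\varphi)\cap\Dom(\psi)$, so $C_{\psi\circ\varphi}=G[V(C_{\psi\circ\varphi})]$ is a connected induced subgraph containing $v$, hence a $v$-rooted tree induced subgraph. Because $\Dom(\varphi)$ and $\Img(\varphi)$ are themselves induced, for any $v$-rooted tree induced subgraph $K\subseteq\Img(\varphi)$ one has $\varphi^{-1}(K)=G[\varphi^{-1}(V(K))]$, which is again $v$-rooted tree induced (and analogously for $\psi$); in particular $\varphi^{-1}(C_{\psi\circ\varphi})$ and $\psi(C_{\psi\circ\varphi})$ are $v$-rooted tree induced subgraphs, and $\psi\circ\varphi|_{\varphi^{-1}(C_{\psi\circ\varphi})}$ is a graph isomorphism between them fixing $v$.

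The main obstacle is associativity. Let $\mathrm{Comp}_v(S)$ denote the vertex set of the component of $v$ in $G[S]$ when $v\in S\subseteq V(G)$. Unwinding the compositions and transporting connected components through the isomorphisms $\varphi,\psi$, the vertex sets of the domains of $(\chi\circ\psi)\circ\varphi$ and $\chi\circ(\psi\circ\varphi)$, after applying $\varphi$, become $\mathrm{Comp}_v\bigl(A\cap\mathrm{Comp}_v(B\cap\psi^{-1}(C))\bigr)$ and $\mathrm{Comp}_v\bigl(\mathrm{Comp}_v(A\cap B)\cap\psi^{-1}(C)\bigr)$ respectively, where $A=V(\Img(\varphi))$, $B=V(\Dom(\psi))$, $C=V(\Dom(\chi))$. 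The crux is the identity
\[
\mathrm{Comp}_v\bigl(X\cap\mathrm{Comp}_v(Y\cap Z)\bigr)=\mathrm{Comp}_v(X\cap Y\cap Z)
\]
for $X,Y,Z$ containing $v$: the $\subseteq$ direction follows from $\mathrm{Comp}_v(S)\subseteq S$ and monotonicity of $\mathrm{Comp}_v$, and the reverse from the observation that any $vu$-path inside $G[X\cap Y\cap Z]$ already witnesses every one of its vertices as lying in $\mathrm{Comp}_v(Y\cap Z)$. Applying this identity and its $X\leftrightarrow Z$ variant shows both sides have common domain vertex set $\mathrm{Comp}_v(A\cap B\cap\psi^{-1}(C))$, and they agree pointwise with $\chi\circ\psi\circ\varphi$.

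Finally, for $\varphi:H\to J$ in $\Tisg(G,v)$ the inverse graph isomorphism $\varphi^{-1}:J\to H$ lies in $\Tisg(G,v)$, and since $H$ is connected one has $V(C_{\varphi\circ\varphi^{-1}})=V(H)$, so $\varphi\circ\varphi^{-1}=\mathrm{id}_J$ and $\varphi\circ\varphi^{-1}\circ\varphi=\varphi$, establishing regularity. The idempotents are exactly the identity maps $\mathrm{id}_A$ on $v$-rooted tree induced subgraphs $A$, and a short computation gives $\mathrm{id}_A\circ\mathrm{id}_B=\mathrm{id}_{G[\mathrm{Comp}_v(V(A)\cap V(B))]}=\mathrm{id}_B\circ\mathrm{id}_A$, so the idempotents commute. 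A regular semigroup with commuting idempotents is an inverse semigroup, completing the proof.
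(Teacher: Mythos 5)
Your proof is correct, and it is both more modular and more complete than the paper's. The closure argument is essentially the same as the paper's (identify $V(C_{\psi\circ\varphi})$ with the component of $v$ in the intersection of the two induced subgraphs, observe it is connected and contains $v$, and transport it through $\varphi^{-1}$ and $\psi$). Where you genuinely diverge is associativity: the paper chases an individual vertex $u$ through the two bracketings, tracking $v,u$-paths by hand through $\Img$'s and $\Dom$'s, whereas you pull everything into the middle graph and reduce the whole computation to the single identity $\mathrm{Comp}_v\bigl(X\cap\mathrm{Comp}_v(Y\cap Z)\bigr)=\mathrm{Comp}_v(X\cap Y\cap Z)$, applied with $X=A$, $Y=B$, $Z=\psi^{-1}(C)$ and with the roles of $X$ and $Z$ exchanged. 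This buys a cleaner, reusable lemma (it is exactly the associativity of the meet in the idempotent semilattice that the paper later constructs in Theorem \ref{TisgLattice}) at the cost of having to justify that $\varphi$ and $\psi^{-1}$ commute with $\mathrm{Comp}_v$, which you correctly reduce to the fact that all graphs involved are induced in $G$. You also explicitly verify regularity and commutation of idempotents and invoke the standard characterization of inverse semigroups; the paper stops after associativity and leans on its earlier blanket remark for $\Fisg(G)$, so your treatment is the more careful one. The only point you assert without argument is that every idempotent of $\Tisg(G,v)$ is an identity map $\mathrm{id}_A$; this needs the short observation that $e\circ e=e$ forces $\Dom(e)=\Img(e)$ and then $e=\mathrm{id}$ in the group $\mathrm{Aut}(\Dom(e))$ (this is the paper's Lemma \ref{idemps}), but it is routine and does not constitute a gap.
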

\begin{proof}
Let $\varphi, \psi, \alpha \in \Tisg(G,v)$. We first note that $\psi \circ \varphi$ is an isomorphism between $v$-rooted tree induced subgraphs of $G$. For if $x,y\in V(C_{\psi\circ\varphi})$ then there is a $v,x$-path contained in both $\Img(\varphi)$ and $\Dom(\psi)$, and similarly for a $v,y$-path. Hence there is a $x,y$-path contained in $\Img(\varphi)$ and $\Dom(\psi)$ and $G[ V(C_{\psi\circ\varphi})]$ is connected. Then it contains a spanning tree which we root at $v$, and as $\psi$ and $\varphi$ preserve $v$, the inverse image and image of $G[ V(C_{\psi\circ\varphi})]$ are also $v$-rooted tree induced subgraphs of $G$. \par
So it now suffices to show composition is associative. Consider $\alpha\circ(\psi\circ\varphi):(\psi\circ\varphi)^{-1}(C_{\alpha\circ(\psi\circ\varphi)})\to\alpha(C_{\alpha\circ(\psi\circ\varphi)})$. Let $u\in V((\psi\circ\varphi)^{-1}(C_{\alpha\circ(\psi\circ\varphi)}))$, then $\psi\circ\varphi(u)\in V(C_{\alpha\circ(\psi\circ\varphi)})$ and there is a $v,\psi(\varphi(u))$-path in both $\Img(\psi\circ\varphi)$ and $\Dom(\alpha)$. As $\Img(\psi\circ\varphi)=\psi(C_{\psi\circ\varphi})$, there is a $\psi^{-1}(v),\psi^{-1}(\psi(\varphi(u)))=v,\varphi(u)$-path in $C_{\psi\circ\varphi}$. Then there is a $v,\varphi(u)$-path in both $\Img(\varphi)$ and $\Dom(\psi)$. Then the $\psi(v),\psi(\varphi(u))=v,\psi(\varphi(u))$-path is a path in both $\Img(\psi)$ and $\Dom(\alpha)$. Hence $\psi(\varphi(u))\in V(C_{\alpha\circ\psi})$, and the $v,\varphi(u)$-path is both a path in $\Img(\varphi)$ and $\Dom(\alpha\circ\psi)$. Therefore $\varphi(u)\in V(C_{(\alpha\circ\psi)\circ\varphi)})$ and $u\in V(\varphi^{-1}(C_{(\alpha\circ\psi)\circ\varphi)}))$. It is similar to show the reverse containment.\par  Hence $\varphi^{-1}(V(C_{(\alpha\circ\psi)\circ\varphi)}))=V((\psi\circ\varphi)^{-1}(C_{\alpha\circ(\psi\circ\varphi)}))$, and $\alpha\circ(\psi\circ\varphi)$ and $(\alpha\circ \psi)\circ \varphi$ have the same domain. It is similar to show they have the same codomain. Finally, as they have the same domain, codomain, and are restrictions across the composition of isomorphisms, $\alpha\circ(\psi\circ\varphi)=(\alpha\circ\psi)\circ\varphi$.
\end{proof}

We now consider the case where we distinguish the vertex rooted path that induces the subgraph.

\begin{definition}
Let $G$ be a graph and $v\in V(G)$. We define $v$-rooted path induced subgraph $H$ of $G$ to be such that $H=G[V(P_H)]$ for some path $P_H$ of $G$ rooted at $v$. We call the pair $(H,P_H)$ the $v$-rooted path pair.
\end{definition}

\begin{definition}
Let $G$ be a graph and $v\in V(G)$. We define $\Pisg(G,v)$ to be the collection of graph isomorphisms of $v$-rooted path pairs, that is subgraph isomorphisms $(\varphi,\varphi{|}_{P_H}):(H,P_H)\to (J, P_J)$ where $\varphi:H\to J$ is a subgraph isomorphism, $\varphi(v)=v$, and $\varphi(P_H)=P_J$. When it does not cause ambiguity, we will refer to $(\varphi,\varphi{|}_{P_H})$ as $\varphi$.
\end{definition}

Composition will be defined similarly to that of $\Tisg(G,v)$ taking into account the distinguished path. Let $\varphi, \psi \in \Pisg(G,v)$. Then $\varphi:(H, P_H) \to (J,P_J)$ and $\psi:(K,P_K) \to (M,P_M)$. Define $P_{\psi\circ \varphi}$ to be the longest $v$-rooted path in common to both $P_J$ and $P_K$. Define $C_{\psi \circ \varphi}=G[V(P_{\psi\circ\varphi})]$. We then define $\psi\circ \varphi:(\varphi^{-1}(C_{\psi\circ\varphi}),\varphi^{-1}(P_{\psi\circ\varphi}))\to(\psi(C_{\psi\circ\varphi}),\psi(P_{\psi\circ\varphi}))$ to be $\psi\circ\varphi{|}_{\varphi^{-1}(C_{\psi\circ\varphi})}$. Then by noting that a vertex $u\in V(C_{\psi\circ\varphi})$ is on a $v,u$-path contained in both $P_J$ and $P_K$, namely $P_{\psi\circ\varphi}$, the following proposition follows from a similar proof to Proposition \ref{TisgISG}.

\begin{proposition}\label{PisgISG}
Let $G$ be a graph and $v\in V(G)$. Then $\Pisg(G,v)$ is an inverse semigroup.
\end{proposition}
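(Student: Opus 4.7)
My plan is to follow the three-step structure of the proof of Proposition \ref{TisgISG}: show $\psi \circ \varphi \in \Pisg(G,v)$, establish associativity, and observe that inverses of pair isomorphisms provide the required $\varphi\circ\varphi^{-1}\circ\varphi=\varphi$ identity. The path setting is in fact cleaner than the tree setting because the linear structure of paths forces the ``common $v$-rooted reachable region'' to itself be a path, so the intersection behaves more rigidly.

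First I would verify closure. Given $\varphi:(H,P_H)\to(J,P_J)$ and $\psi:(K,P_K)\to(M,P_M)$, the two $v$-rooted paths $P_J$ and $P_K$ possess a unique longest shared $v$-rooted initial sub-path: once they diverge at some vertex by taking different next-neighbors, no later vertices can coincide since a path visits each vertex at most once. Hence $P_{\psi\circ\varphi}$ is well defined, and $C_{\psi\circ\varphi}=G[V(P_{\psi\circ\varphi})]$ is a $v$-rooted path induced subgraph. Because $\varphi$ is a pair isomorphism with $\varphi(v)=v$ and $\varphi(P_H)=P_J$, the preimage $\varphi^{-1}(P_{\psi\circ\varphi})$ is a $v$-rooted sub-path of $P_H$, so $(\varphi^{-1}(C_{\psi\circ\varphi}),\varphi^{-1}(P_{\psi\circ\varphi}))$ is a $v$-rooted path pair; the codomain side is symmetric under $\psi$. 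Thus $\psi\circ\varphi\in\Pisg(G,v)$.

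Next, for associativity, I would mimic the vertex-tracing argument in Proposition \ref{TisgISG}, using the hint that a vertex $u\in V(C_{\psi\circ\varphi})$ lies on a unique $v,u$-path contained in both $P_J$ and $P_K$, namely the initial segment of $P_{\psi\circ\varphi}$ up to $u$. Starting with $u\in V((\psi\circ\varphi)^{-1}(C_{\alpha\circ(\psi\circ\varphi)}))$, I would pull the relevant initial segment of $P_{\alpha\circ(\psi\circ\varphi)}$ back through $\psi^{-1}$ to obtain a $v,\varphi(u)$-path lying in both $\Img(\varphi)$ and $\Dom(\psi)$, and push forward to recover a $v,\psi(\varphi(u))$-path in both $\Img(\psi)$ and $\Dom(\alpha)$. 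This yields $\varphi(u)\in V(C_{\alpha\circ\psi})$ and then $u\in V(\varphi^{-1}(C_{(\alpha\circ\psi)\circ\varphi}))$. The reverse containment is symmetric. As in Proposition \ref{TisgISG}, equal domains, equal codomains, and the fact that both maps are restrictions of the underlying composition of set isomorphisms force the two associations to agree.

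Finally, for each $\varphi\in\Pisg(G,v)$ the inverse pair $(\varphi^{-1},\varphi|_{P_H}^{-1}):(J,P_J)\to(H,P_H)$ is itself a $v$-rooted path pair isomorphism, and $\varphi\circ\varphi^{-1}\circ\varphi=\varphi$ by direct unwinding of the definitions, since $P_{\varphi\circ\varphi^{-1}\circ\varphi}=P_H$. The main point requiring care is establishing that $P_{\psi\circ\varphi}$ is unambiguously the longest common $v$-rooted sub-path of $P_J$ and $P_K$ and that the distinguished path structure is preserved at every stage; once this linear-ordering observation is in hand, the tree-case argument transfers essentially verbatim without needing the spanning-tree step used in Proposition \ref{TisgISG}.
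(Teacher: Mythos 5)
Your proposal is correct and follows essentially the same route as the paper, which itself simply observes that each $u\in V(C_{\psi\circ\varphi})$ lies on a $v,u$-path contained in both $P_J$ and $P_K$ (namely an initial segment of $P_{\psi\circ\varphi}$) and then defers to the argument of Proposition \ref{TisgISG}; your write-up just makes that transfer explicit. (One small caveat: your claim that two $v$-rooted paths cannot share vertices after diverging is false in general, but it is also unnecessary --- the common $v$-rooted sub-paths of $P_J$ and $P_K$ are initial segments and hence form a chain, so the longest one is unique regardless.)
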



\section{Semilattice Structures}\label{SemilatticeS}
In this section we concern ourselves with the semilattice structure formed by the idempotents of our inverse semigroups. The following lemma will be useful in determining the structure of these semilattices.
\begin{lemma}\label{idemps}
Let $G$ be a graph, then $e\in\Fisg(G)$ is idempotent if and only if there exists a subgraph $H$ of $G$ with $e=id_{H}$, the identity automorphism.
\end{lemma}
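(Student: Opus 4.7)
The plan is to handle the two implications separately. For the ($\Leftarrow$) direction, I would verify the definition directly: if $e = id_H$ for a subgraph $H$ of $G$, then the composition formula gives $e \circ e$ the domain $e^{-1}(\Dom(e)) = id_H^{-1}(H) = H$, on which it acts as $id_H \circ id_H = id_H$; hence $e \circ e = e$, confirming that $e$ is idempotent.

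For the ($\Rightarrow$) direction, suppose $e : H \to J$ is an idempotent element of $\Fisg(G)$. First I would compute the domain of $e \circ e$ using the composition formula: it is $e^{-1}(\Dom(e)) = \{x \in H : e(x) \in H\}$. Setting this equal to $\Dom(e) = H$ forces $e(x) \in H$ for every $x \in H$, i.e.\ $J = e(H) \subseteq H$ as subgraphs of $G$.

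Next, since $e : H \to J$ is a graph isomorphism of finite subgraphs, $|V(H)| = |V(J)|$ and $|E(H)| = |E(J)|$; combined with the containment $J \subseteq H$ just obtained, this forces $H = J$. Hence $e$ is a graph automorphism of $H$, and the idempotent identity $e \circ e = e$ on the common domain $H$ reads $e(e(x)) = e(x)$ for every $x \in H$. Since $e$ is a bijection on $H$, cancellation yields $e(x) = x$ for all $x \in H$, so $e = id_H$, as required.

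The hard part, such as it is, will be the bookkeeping around the partial composition: one must recognize that equality of \emph{domains} in the equation $e \circ e = e$ already forces the subgraph containment $J \subseteq H$, and then use that $e$ is an isomorphism of finite subgraphs to promote that containment to equality. Once $H = J$ is established, the conclusion $e = id_H$ reduces to the elementary observation that a bijection satisfying $e \circ e = e$ must be the identity.
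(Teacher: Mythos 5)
Your proof is correct and follows essentially the same route as the paper: show that $e\circ e=e$ forces $\Dom(e)=\Img(e)$, so that $e$ is an automorphism of $H=\Dom(e)$, and then conclude $e=id_H$. You simply fill in details the paper elides — the containment-plus-finiteness argument for $H=J$, and direct cancellation in place of the paper's appeal to the fact that the only idempotent of the group $\mathrm{Aut}(H)$ is the identity.
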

\begin{proof}
($\Leftarrow$) For every subgraph $H$, $id_{H}$ is idempotent.\\
($\Rightarrow$) Let $e\in\Fisg(G)$ be an idempotent. Then as $ee=e$, $\Dom(e)=\Img(e)$ and $e$ is an automorphism of a subgraph $H=\Dom(e)$. As \text{Aut}$(H)$ is a group, and the only idempotent of a group is the identity, $e=id_{H}$.
\end{proof}

As for any graph $G$ and $v\in V(G)$, $\Iisg(G)$, $\Tisg(G,v)$ are subsemigroups of $\Fisg(G)$, we note the idempotents are the identities of vertex induced subgraphs and $v$-rooted tree induced subgraphs for these subsemigroups respectively. Using this lemma we now characterize the idempotent semilattice structure of $\Fisg(G), \Iisg(G),$ and $\Tisg(G,v)$.

\begin{theorem}\label{FisgLattice}
Given a graph $G$, the semilattice of idempotents of $\Fisg(G)$ form a bi-Heyting Algebra.
\end{theorem}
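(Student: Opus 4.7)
The plan is to leverage Lemma~\ref{idemps} to identify the idempotent semilattice of $\Fisg(G)$ with the lattice of subgraphs of $G$, and then exhibit enough structure on the latter to conclude it is bi-Heyting. Since $G$ is finite, the subgraph lattice is finite, and every finite bounded distributive lattice automatically carries both a Heyting and a co-Heyting structure, so the real work is checking that the subgraph lattice is bounded and distributive.

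First I would verify that the map $H\mapsto \mathrm{id}_H$ is an order isomorphism from the poset of subgraphs of $G$ (under $\subseteq$) to the idempotents of $\Fisg(G)$ under the natural order $e\le f\iff ef=e$. The key computation is $\mathrm{id}_H\circ \mathrm{id}_K=\mathrm{id}_{H\cap K}$, which follows from the definition of composition in $\Fisg(G)$; this immediately shows that the semilattice meet of idempotents corresponds to subgraph intersection. To upgrade from a meet-semilattice to a lattice, I would then observe that the subgraph union $H\cup K$, defined by $V(H)\cup V(K)$ and $E(H)\cup E(K)$, is again a subgraph of $G$, that $\mathrm{id}_{H\cup K}$ is an upper bound of $\mathrm{id}_H$ and $\mathrm{id}_K$ in the idempotent order, and that any common upper bound $\mathrm{id}_L$ must satisfy $H\cup K\subseteq L$, making $\mathrm{id}_{H\cup K}$ the join. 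The bounds are $\mathrm{id}_\emptyset$ (bottom) and $\mathrm{id}_G$ (top).

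Next I would establish distributivity. The map $H\mapsto (V(H),E(H))$ embeds the subgraph lattice into the Boolean algebra $2^{V(G)}\times 2^{E(G)}$, with multi-edges and loops treated as distinct tokens in $E(G)$ so there is no ambiguity. Since this embedding preserves the componentwise union and intersection, and Boolean algebras are distributive, the subgraph lattice is a bounded sublattice of a distributive lattice, hence itself distributive.

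Finally, because the resulting lattice is finite, bounded, and distributive, I would conclude the bi-Heyting property by exhibiting the two residuation operations as finite joins/meets: the relative pseudo-complement
\[
H\Rightarrow K \;=\; \bigvee\{L\subseteq G : L\cap H\subseteq K\}
\]
and the dual operation
\[
H\setminus K \;=\; \bigwedge\{L\subseteq G : H\subseteq K\cup L\}.
\]
Each set on the right is nonempty (containing $\emptyset$ and $H$ respectively) and finite, and distributivity guarantees that the join (resp.\ meet) itself lies in the set, giving the required adjunctions $L\le H\Rightarrow K\iff L\wedge H\le K$ and $H\setminus K\le L\iff H\le K\vee L$. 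The main delicate point I anticipate is handling multi-edges and loops cleanly when defining union and intersection of subgraphs, so that the embedding into $2^{V(G)}\times 2^{E(G)}$ and the distributive identity genuinely go through at the level of graphs rather than just at the level of simple set-systems.
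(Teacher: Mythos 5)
Your proposal is correct, and it follows the paper's first step exactly: both use Lemma~\ref{idemps} to identify the idempotent semilattice of $\Fisg(G)$ with the lattice of subgraphs of $G$ ordered by inclusion (with $\mathrm{id}_H\circ \mathrm{id}_K=\mathrm{id}_{H\cap K}$ giving the meet). Where you diverge is the final step. The paper simply cites Lawvere and Reyes for the fact that the subgraph lattice of a graph is a bi-Heyting algebra; that result is topos-theoretic in origin (multigraphs with loops form a presheaf topos, and subobject lattices in any presheaf topos are bi-Heyting), so it applies without any finiteness hypothesis and produces the two residuations conceptually. You instead give a self-contained elementary argument: the subgraph lattice embeds as a sublattice of the Boolean algebra $2^{V(G)}\times 2^{E(G)}$ (the image is closed under componentwise union and intersection, since the endpoints of an edge common to $H$ and $K$ lie in $V(H)\cap V(K)$), hence is bounded and distributive, and every finite bounded distributive lattice is bi-Heyting because the joins and meets in your formulas for $H\Rightarrow K$ and $H\setminus K$ are finite and distributivity shows they land in the defining sets. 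This trades generality for transparency: your argument leans on the standing finiteness assumption of the paper (the Lawvere--Reyes route does not), but it makes the Heyting and co-Heyting operations completely explicit and avoids importing machinery. Both are valid proofs of the stated theorem.
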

\begin{proof}
Let $e$ and $f$ be idempotents of $\Fisg(G)$ with $e\leq f$. Thus $e=ef$. By Lemma \ref{idemps}, $e=id_{H}$ for some subgraph $H$ of $G$ and $f=id_{J}$ for some subgraph $J$ of $G$. Hence $id_{H}=id_{H}\circ id_{J}$. By definition of composition in $\Fisg(G)$ this means that $H$ is a subgraph of $J$. The converse trivially holds. Thus we have order preserving isomorphism between the semilattice of idempotents of $\Fisg(G)$ and that of subgraphs of $G$ ordered by inclusion. By \cite{Lawvere, Reyes} the semilattice of subgraphs of a graph form a bi-Heyting Algebra.
\end{proof}

We note that as there are inverse semigroups whose semilattice of idempotents are non-graded lattices, there is no hope of establishing a Fruct style theorem for inverse semigroups using $\Fisg(G)$. The next two theorems establish graded lattice structures for $\Iisg(G)$ and $\Tisg(G,v)$.

\begin{theorem}\label{IisgLattice}
Given a graph $G$, the semilattice of idempotents of $\Iisg(G)$ form a Boolean Algebra.
\end{theorem}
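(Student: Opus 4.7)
My plan is to follow the template of the preceding proof of Theorem \ref{FisgLattice}, swap in vertex-induced subgraphs, and upgrade the conclusion from bi-Heyting to Boolean by identifying the relevant poset with a power set lattice.

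First, I would invoke Lemma \ref{idemps} together with the remark stated immediately after it to conclude that the idempotents of $\Iisg(G)$ are precisely the identity maps $id_H$ on vertex-induced subgraphs $H$ of $G$. Then, exactly as in the proof of Theorem \ref{FisgLattice}, the natural order on idempotents translates to subgraph containment: if $e = id_H$ and $f = id_J$, then $e \leq f$ iff $e = ef$ iff $H \subseteq J$. This gives an order-preserving isomorphism between the semilattice of idempotents of $\Iisg(G)$ and the poset of vertex-induced subgraphs of $G$ ordered by inclusion.

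The new content is identifying this poset as a Boolean algebra. Here I would exhibit the order-isomorphism $S \mapsto G[S]$ between the power set $2^{V(G)}$ and the poset of vertex-induced subgraphs, with inverse $H \mapsto V(H)$. Well-definedness and bijectivity are immediate since a vertex-induced subgraph is completely determined by its vertex set. I would then check that this isomorphism transports the Boolean operations: meets correspond to vertex-set intersections via $G[S_1] \cap G[S_2] = G[S_1 \cap S_2]$, joins correspond to vertex-set unions via $G[S_1] \cup G[S_2] = G[S_1 \cup S_2]$ (which is again vertex-induced), and the complement of $G[S]$ inside the lattice is $G[V(G) \setminus S]$.

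The only real subtlety — and the place a careful reader could stumble — is that the complement in this lattice is \emph{not} the usual graph-theoretic complement (which swaps edges and non-edges), but rather the vertex-induced subgraph on the complementary vertex set. Once this is flagged explicitly, the conclusion is automatic: the power set of a finite set is a Boolean algebra, Boolean structure transports across the order-isomorphism, and therefore the semilattice of idempotents of $\Iisg(G)$ is a Boolean algebra.
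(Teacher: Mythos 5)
Your proposal is correct and follows essentially the same route as the paper: establish the order isomorphism with vertex-induced subgraphs under inclusion, then identify that poset with the power set of $V(G)$. One small slip worth noting: the literal equation $G[S_1]\cup G[S_2]=G[S_1\cup S_2]$ can fail (the left side may miss edges between $S_1\setminus S_2$ and $S_2\setminus S_1$), so the join in the lattice of induced subgraphs is $G[S_1\cup S_2]$ but is not the graph-theoretic union; this does not affect your argument since the order isomorphism already transports the Boolean structure.
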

\begin{proof} Similarly to the proof of Theorem \ref{FisgLattice}, as the idempotents of $\Iisg(G)$ are the identities of induced subgraphs of $G$, the semilattice of idempotents of $\Iisg(G)$ is order isomorphic to the semilattice of vertex induced subgraphs of $G$ ordered by inclusion. However, this semilattice is order isomorphic to the Boolean Algebra of subsets of $V(G)$ ordered by inclusion, for given $X, Y$ subsets of $V(G)$, $X\subseteq Y$ if and only if $G[X]$ is a subgraph of $G[Y]$. 
\end{proof}

\begin{theorem}\label{TisgLattice}
Given a graph $G$ and $v\in V(G)$, the semilattice of idempotents of $\Tisg(G,v)$ form a graded lattice.
\end{theorem}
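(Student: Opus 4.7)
The plan is to mimic the identification used in Theorems \ref{FisgLattice} and \ref{IisgLattice}, replacing the semilattice of idempotents by a concrete poset of subgraphs, and then verify the lattice and grading axioms combinatorially. By Lemma \ref{idemps} applied inside $\Tisg(G,v)\subseteq\Fisg(G)$, the idempotents of $\Tisg(G,v)$ are precisely the identities $\mathrm{id}_H$ where $H$ is a $v$-rooted tree induced subgraph. I would first observe that these are exactly the connected vertex induced subgraphs of $G$ containing $v$: the forward direction is immediate, and the reverse holds because any connected induced subgraph containing $v$ admits a spanning tree rooted at $v$. Thus $\mathrm{id}_H\mapsto H$ is an order isomorphism between the idempotent semilattice of $\Tisg(G,v)$ and the poset $\mathcal{L}_v(G)$ of connected vertex induced subgraphs of $G$ containing $v$, ordered by inclusion, and it suffices to prove $\mathcal{L}_v(G)$ is a graded lattice.

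For the lattice structure, the join of $H_1,H_2\in\mathcal{L}_v(G)$ is $G[V(H_1)\cup V(H_2)]$, which is vertex induced, contains $v$, and is connected because $v$ lies in both. For the meet I would read off the composition defined before Proposition \ref{TisgISG}: $\mathrm{id}_{H_1}\circ\mathrm{id}_{H_2}$ is the identity on $G[V(C)]$, where $V(C)$ consists of those $u\in V(G)$ admitting a $v,u$-path lying in both $H_1$ and $H_2$. Since $H_1,H_2$ are induced, such paths are exactly paths in $G[V(H_1)\cap V(H_2)]$, so $V(C)$ is the vertex set of the $v$-component of $G[V(H_1)\cap V(H_2)]$ and $G[V(C)]$ is that component. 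A direct check confirms this is the largest element of $\mathcal{L}_v(G)$ contained in both $H_1$ and $H_2$.

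For gradedness I would take $r(H)=|V(H)|-1$, so the minimum $G[\{v\}]$ has rank $0$ and $r$ is strictly monotone. The crux is the cover relation: given $H\subsetneq K$ in $\mathcal{L}_v(G)$, I would pick $w\in V(K)\setminus V(H)$ adjacent in $K$ to some vertex of $V(H)$, which exists because a $v,u$-path in $K$ for any $u\in V(K)\setminus V(H)$ has a first vertex outside $V(H)$. Then $G[V(H)\cup\{w\}]$ lies in $\mathcal{L}_v(G)$ and strictly between $H$ and $K$ unless $|V(K)|=|V(H)|+1$. Hence covers add exactly one vertex, $r$ is a bona fide rank function, and $\mathcal{L}_v(G)$ is graded.

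The main obstacle I anticipate is pinpointing the meet: unlike for $\Iisg(G)$, the naive intersection $G[V(H_1)\cap V(H_2)]$ need not lie in $\mathcal{L}_v(G)$ because it may disconnect (the Petersen graph example earlier in the paper is precisely this phenomenon), so the meet must be cut down to the $v$-component, and this refinement has to be shown to coincide with the composition of idempotents as defined immediately before Proposition \ref{TisgISG}.
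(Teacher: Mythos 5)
Your proposal is correct and follows essentially the same route as the paper: the same order isomorphism onto the poset of $v$-rooted tree induced subgraphs (equivalently, connected induced subgraphs containing $v$), the same join $G[V(H_1)\cup V(H_2)]$, and a meet that coincides with the paper's, since the set of vertices joined to $v$ by a path lying in both $H_1$ and $H_2$ is exactly the vertex set of the $v$-component of $G[V(H_1)\cap V(H_2)]$ when both are induced. Your explicit cover-relation argument (covers add exactly one vertex, so $r(H)=|V(H)|-1$ is a rank function) is in fact more complete on the gradedness point than the paper's, which only remarks that all $v$-rooted spanning trees of the component containing $v$ have the same number of edges.
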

\begin{proof}
Similarly to the proof of Theorem \ref{FisgLattice}, as the idempotents of $\Tisg(G,v)$ are the identities of $v$-rooted tree induced subgraphs of $G$, the semilattice of idempotents of $\Tisg(G,v)$ is order isomorphic to the semilattice of $v$-rooted tree induced subgraphs of $G$ ordered by inclusion.\par
 Let $H_1$ and $H_2$ be $v$-rooted tree induced subgraphs of $G$. We define $H_1\wedge H_2=G[V(H_1\wedge H_2)]$ where $V(H_1\wedge H_2)$ is the set of vertices such that there is a $v,u$-path contained both in $H_1$ and $H_2$. We note that $H_1\wedge H_2$ is a $v$-rooted tree induced subgraph of $G$ by a similar argument to the proof of Proposition \ref{TisgISG}. So it suffices to show it is meet of $H_1$ and $H_2$.\par
Now let $K$ be a $v$-rooted tree induced subgraph of $G$ such that $K$ is a subgraph of both $H_1$ and $H_2$. Let $u\in V(K)$. As $K$ contains a $v$-rooted tree that spans it, there exists a $v,u$-path contained in $K$. As $K$ is a subgraph of $H_1$ and $H_2$, this $v,u$-path is contained in both $H_1$ and $H_2$. Hence $u\in V(H_1\wedge H_2)$, and $V(K)\subseteq V(H_1\wedge H_2)$. Then $K=G[V(K)]$ is a subgraph of $H_1\wedge H_2$.\par
We now define $H_1\vee H_2=G[V(H_1)\cup V(H_2)]$. For any two vertices $x,y\in V(H_1)\cup V(H_2)$ we have a path from $v$ to $x$ in either $H_1$ or $H_2$ and similarly for $v$ to $y$. Thus $H_1\vee H_2$ is connected and contains a spanning tree. Rooting this tree at $v$ yields $H_1 \vee H_2$ as a $v$-rooted tree induced subgraph of $G$.\par
Now let $J$ be a $v$-rooted tree induced subgraph of $G$ that contains $H_1$ and $H_2$ as subgraphs, and let $u\in V(H_1\vee H_2)=V(H_1)\cup V(H_2)$. Then as both $H_1$ and $H_2$ are subgraphs of $J$, $u\in V(J)$. Then as $V(H_1\vee H_2)\subseteq V(J)$, $H_1\vee H_2$ is a subgraph of $J$.\par
As is $G$ is finite, the lattice is bounded. For the component of $G$ containing $v$, all $v$-rooted spanning trees have the same number of edges. Hence every chain of the lattice has the same length, and the lattice is graded.
\end{proof}

Finally, we see that $\Pisg(G,v)$ will not be a lattice except for a specific class of rooted graphs.

\begin{proposition}\label{PisgSemilattice}
Let $G$ be a graph and $v\in V(G)$, then the idempotents of $\Pisg(G,v)$ form a lattice if and only if $G$ is a $v$-rooted path.
\end{proposition}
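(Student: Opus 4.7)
I would prove the two directions separately. For $(\Leftarrow)$, if $G$ is itself a $v$-rooted path, every $v$-rooted path in $G$ is an initial segment of $G$, so the collection of $v$-rooted path pairs forms a chain under the prefix order, and any chain is automatically a lattice. Via an adaptation of Lemma \ref{idemps} to $\Pisg(G,v)$ (the idempotents are the identities $id_{(H,P_H)}$ of $v$-rooted path pairs, by the same ``only idempotent of a group is the identity'' argument), this gives the desired lattice structure on the idempotents.

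For $(\Rightarrow)$ I would argue by contrapositive, so assume $G$ is not a $v$-rooted path. The first step is to read off the induced order from the composition rule of $\Pisg(G,v)$: $id_{(H_1,P_1)} \leq id_{(H_2,P_2)}$ iff the longest common $v$-rooted path of $P_1$ and $P_2$ equals $P_1$, equivalently $P_1$ is an initial segment of $P_2$. The key observation is that two such idempotents admit a join iff there is a single $v$-rooted path containing both $P_1$ and $P_2$ as initial segments, and since the initial segments of any one path form a chain, this is equivalent to $P_1, P_2$ being comparable in the prefix order. Thus the lattice property is equivalent to the collection of $v$-rooted paths of $G$ being totally ordered by prefix; this in particular yields a unique maximal $v$-rooted path $P^\ast = v_0 v_1 \cdots v_n$ (with $v_0 = v$), and the task reduces to showing that such total ordering forces $G$ to coincide with $P^\ast$.

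To finish, I would show that any deviation of $G$ from $P^\ast$ produces two incomparable $v$-rooted paths. If some vertex $u$ in the component of $v$ does not lie on $P^\ast$, take a shortest $v$-to-$u$ path and let $v_i$ be its last vertex in $V(P^\ast)$, with next vertex $u' \notin V(P^\ast)$; then $v_0 \cdots v_i u'$ either properly extends $P^\ast$ (if $i = n$, contradicting maximality) or is incomparable with $P^\ast$ (if $i < n$, since $u' \neq v_{i+1}$). An analogous construction using a chord $v_i v_j$ with $j > i+1$ rules out extra edges in the component of $v$. Hence the component of $v$ equals $P^\ast$; under the standing interpretation that $G$ is connected (isolated components outside the component of $v$ do not affect $\Pisg(G,v)$), $G$ itself is a $v$-rooted path. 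The main obstacle is precisely this final case analysis, especially the boundary cases $v_i = v_n$ and the multigraph subtleties of multi-edges between consecutive vertices of $P^\ast$ or of loops; the conceptual core --- that joins in the idempotent semilattice correspond to common $v$-rooted extensions of paths --- is clean, and the graph-theoretic bookkeeping is where the real work lies.
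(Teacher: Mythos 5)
Your proposal follows essentially the same route as the paper: identify the idempotent order with the prefix (inclusion) order on $v$-rooted path pairs, observe that a join can exist only when the two paths admit a common $v$-rooted path extension (equivalently, are prefix-comparable), and show that a non-path $G$ yields two incomparable rooted paths. You carry out explicitly the case analysis (off-path vertices, chords) that the paper compresses into a single asserted sentence, and the boundary cases you flag are real --- a loop, a parallel edge, or a component not containing $v$ can leave the chain of $v$-rooted paths intact while making $G$ a non-path --- but these are issues with the literal statement that the paper's own one-line argument also glosses over, not a defect of your approach.
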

\begin{proof}
$(\Rightarrow)$ Similarly to the proof of Theorem \ref{FisgLattice}, the idempotents of $\Pisg(G,v)$ are the identities of $v$-rooted path pairs, and the idempotent order is order isomorphic to the inclusion order of $v$-rooted path pairs. So suppose that $G$ is not a $v$-rooted path. There there are two distinct $v$-rooted paths $P_H$ and $P_J$ in $G$ with $v$-rooted path pairs $(H,P_H)$ and $(J,P_J)$. As the only $v$-rooted subgraph that can contain both $P_H$ and $P_J$ contains either a non-path tree or a cycle, $(H,P_H)\vee (J,P_J)$ does not exist. Hence $\Pisg(G,v)$ is not a lattice.\par
$(\Leftarrow)$  As the semilattice of $v$-rooted path pairs of a $v$-rooted path is a chain, it is a lattice.
\end{proof}

\section{Ideals and Ideal Lattice Structures}\label{Ideals}

In this section, we study the ideals of our graph inverse semigroups and the lattice structure of these ideals.

\begin{definition}
Given a semigroup $S$, a \textit{two sided ideal} (or simply \textit{ideal}) of $S$, $I$ is a set $I\subseteq S$ where $SI, IS\subseteq I$.  \cite{SGT}
\end{definition}

This is of particular note since ideals of a semigroup induce an equivalence relationship which leads to the construction of a quotient semigroup:

\begin{theorem}[Rees Factor Theorem]
Let $S$ be a semigroup and let $I$ be an ideal of $S$.  Then define the relation $\sim_I\subseteq S\times S$ where $a\sim_I b$ if and only if $a,b\in I$ or $a=b$.  It follows that:

\begin{enumerate}
\item $\sim$ is an equivalence relationship.
\item $S/\sim_I$ is a well defined factor semigroup.
\end{enumerate}\cite{Rees}
\end{theorem}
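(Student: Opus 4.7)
The plan is to verify the two claims directly by unpacking the definitions and doing a small case analysis; no machinery beyond the defining conditions $IS, SI \subseteq I$ is needed.

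For part (1), I would check the three axioms of an equivalence relation in turn. Reflexivity is immediate from the clause $a = b$ applied with $a = b$. Symmetry holds because both defining clauses, namely $a, b \in I$ and $a = b$, are symmetric in $a$ and $b$. For transitivity, assuming $a \sim_I b$ and $b \sim_I c$, I would split into the four cases determined by which clause supplies each relation: when both relations come from equality, $a = c$; when one relation is an equality and the other puts its pair into $I$, the equality forces the remaining endpoint into $I$, giving $a, c \in I$; and when both relations come from the ideal clause, we have $a, c \in I$ directly.

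For part (2), the real content is showing that the induced operation $[a][b] := [ab]$ is well defined on equivalence classes; associativity will then be inherited from $S$. Suppose $a \sim_I a'$ and $b \sim_I b'$. I would again case on whether each relation is realized by equality or by both elements lying in $I$. If $a = a'$ and $b = b'$, then $ab = a'b'$, so the classes agree. Otherwise at least one of the pairs $\{a, a'\}$ or $\{b, b'\}$ is contained in $I$, and then the ideal conditions $IS \subseteq I$ and $SI \subseteq I$ immediately force both $ab$ and $a'b'$ to lie in $I$; in particular $ab \sim_I a'b'$.

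The only mildly delicate step is keeping the well-definedness case analysis tidy, but this is not a real obstacle because the ideal hypotheses are tailor-made to absorb any factor belonging to $I$. Once well-definedness is in hand, associativity of the quotient product is just a translation of associativity in $S$ via $([a][b])[c] = [(ab)c] = [a(bc)] = [a]([b][c])$, so $S/\sim_I$ is a well defined semigroup.
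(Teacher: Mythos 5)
Your proof is correct. Note that the paper does not actually prove this statement---it is quoted as a classical result with a citation to Rees---so there is no in-paper argument to compare against. Your case analysis for transitivity and for well-definedness of $[a][b]:=[ab]$ is the standard textbook argument, and the key observation that the ideal conditions $IS\subseteq I$ and $SI\subseteq I$ absorb any factor lying in $I$ is exactly the right one; the proof is complete as written.
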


Since each ideal $I$ of $S$ gives rise to a factor semigroup, and consequently a kernel of a semigroup homomorphism, one naturally wishes to classify all such ideals for the inverse semigroups of graphs and consider their structure.\\

We begin with a utility lemma regarding ideals.

\begin{lemma}\label{lemideal}
Let $S$ be a semigroup and $\{I_i\}$ be a collection of ideals of $S$.  Then $\bigcap I_i$ and $\bigcup I_i$ are ideals of $S$.\cite{SGT}
\end{lemma}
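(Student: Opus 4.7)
The plan is to verify the two defining closure properties ($SI \subseteq I$ and $IS \subseteq I$) directly for each of $\bigcap I_i$ and $\bigcup I_i$, using only the fact that each individual $I_i$ is an ideal. Since the definition of an ideal is preserved under both elementwise intersection and elementwise union of subsets, both claims reduce to short membership chases.

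First I would handle the intersection. Let $x \in \bigcap I_i$ and $s \in S$. Then for every index $i$, $x \in I_i$, and since $I_i$ is an ideal of $S$, both $sx \in I_i$ and $xs \in I_i$. As this holds for every $i$, I conclude $sx, xs \in \bigcap I_i$, establishing $S\bigl(\bigcap I_i\bigr) \subseteq \bigcap I_i$ and $\bigl(\bigcap I_i\bigr)S \subseteq \bigcap I_i$.

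Next I would handle the union. Let $x \in \bigcup I_i$ and $s \in S$. By definition of union there exists some index $j$ with $x \in I_j$. Since $I_j$ is an ideal, $sx, xs \in I_j \subseteq \bigcup I_i$, so the required containments follow.

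There is no real obstacle here; the lemma is a routine structural fact whose proof amounts to unpacking definitions. The only minor point worth noting in the write-up is the degenerate edge case of an empty indexing family (in which case $\bigcap I_i$ is conventionally taken to be $S$ itself, which is trivially an ideal, and $\bigcup I_i = \emptyset$, which satisfies the closure conditions vacuously), but since the paper's definition admits any subset closed under left and right multiplication by $S$, no additional care is required.
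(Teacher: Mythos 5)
Your proof is correct and is exactly the standard membership-chase argument; the paper itself gives no proof of this lemma, simply citing it to a reference, so your write-up supplies precisely the routine verification that citation defers to. The remark about the empty indexing family is a harmless extra, consistent with the paper's definition of ideal as any subset $I$ with $SI, IS \subseteq I$.
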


Then, the following results with proof for $\Fisg(G)$ are given.

\begin{definition}\label{defidealgenf}
Given a graph $G$ and $a\in \Fisg(G)$ where $a:H\to K$, we call $\langle a \rangle:=\{\varphi:L\to M\ |\ \text{$L, M$ are isomorphic to a  subgraph of $H$}\}$ the \textit{ideal generated by $a$}.
\end{definition}

We note that since $\mu_0$, the empty map, is in $\Fisg(G)$, $\mu_0\circ a=a\circ\mu_0=\mu_0\in \langle a \rangle$.  To earn its name, we prove that $\langle a \rangle$ is in fact an ideal.

\begin{proposition}\label{propisidealf}
Given a graph $G$ and $a\in \Fisg(G)$ where $a:H\to K$, it follows that $\langle a \rangle$ is an ideal.
\end{proposition}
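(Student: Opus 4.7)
The plan is to verify the two ideal inclusions $\Fisg(G)\cdot\langle a\rangle\subseteq\langle a\rangle$ and $\langle a\rangle\cdot\Fisg(G)\subseteq\langle a\rangle$ directly, by computing the domain and codomain of each composite and checking that both remain isomorphic to a subgraph of $H$. The key structural observation I will use throughout is that the property ``is isomorphic to a subgraph of $H$'' is closed under two operations: passing to a subgraph (since a subgraph of a subgraph of $H$ is a subgraph of $H$), and transporting along a graph isomorphism (since if $L'\cong L''$ and $L''$ is a subgraph of $H$ up to isomorphism, then so is $L'$).

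First I would fix $\varphi:L\to M$ in $\langle a\rangle$ and $\psi:P\to Q$ in $\Fisg(G)$. Using the composition recipe from Section~\ref{ISGs}, the map $\psi\circ\varphi$ has domain $\varphi^{-1}(\Dom(\psi))=\varphi^{-1}(P\cap M)$ and codomain $\psi(P\cap M)$. The domain is a subgraph of $L$, so by the first closure property above it is isomorphic to a subgraph of $H$; the codomain is isomorphic to the domain via the isomorphism $\psi\circ\varphi$, so by the second closure property it is also isomorphic to a subgraph of $H$. Thus $\psi\circ\varphi\in\langle a\rangle$.

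The reverse-side inclusion is symmetric. For $\varphi\circ\psi$, the codomain is $\varphi(Q\cap L)$, which is a subgraph of $M$ and hence isomorphic to a subgraph of $H$; the domain is isomorphic to this codomain via $\varphi\circ\psi$ and so also qualifies. The only special case worth noting is $\mu_0$: both $\mu_0\circ a$ and $a\circ\mu_0$ equal $\mu_0$, whose empty domain and codomain are trivially isomorphic to the empty subgraph of $H$, so $\mu_0\in\langle a\rangle$ and the arguments above go through uniformly.

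I expect no serious obstacle here; the entire content is bookkeeping about the definition of composition in $\Fisg(G)$ together with the two closure properties stated in the first paragraph. The only mild subtlety is to remember that $\Dom(\psi\circ\varphi)$ is only a subgraph of $L$ (not of $M$), so one must use the composite isomorphism itself to transfer the ``isomorphic to a subgraph of $H$'' property to the codomain — and analogously on the other side. Once this is noted, the proof reduces to one line in each direction.
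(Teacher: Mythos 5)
Your proof is correct and follows essentially the same bookkeeping as the paper: intersect domains/images, observe the result is a subgraph of something isomorphic to a subgraph of $H$, and transport that property across the composite isomorphism to the other side. If anything, your version is slightly more complete than the paper's, since you verify closure under composition for an arbitrary $\varphi \in \langle a\rangle$ (as the definition of ideal requires), whereas the paper's proof only explicitly treats compositions with $a$ itself.
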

\begin{proof}
Let $\alpha\in \Fisg(G)$.  We consider $\alpha\circ a$.  If $\text{Im}(a)\cap \text{Dom}(\alpha)=\emptyset$ then $\alpha\circ a$ is the empty map and is in $\langle a \rangle$.  Otherwise $\alpha\circ a=\alpha\circ a|_{a^{-1}(\text{Dom}(\alpha))}$.  Notice that $\text{Im}(a)\cap \text{Dom}(\alpha)$ is a subgraph of $\text{Im}(a)$.  Since $\text{Im}(a)$ is isomorphic to  $H$, $\text{Im}(a)\cap \text{Dom}(\alpha)$ is isomorphic to a subgraph of $H$ and $\alpha\circ a$ is an isomorphism of  subgraphs isomorphic to a subgraph of $H$.

Similarly, consider $a\circ \alpha$.  If $\text{Im}(\alpha)\cap \text{Dom}(a)=\emptyset$ then $a\circ\alpha$ is the empty map and is in $\langle a \rangle$.  Otherwise $a\circ \alpha=a\circ\alpha|_{\alpha^{-1}(\text{Dom}(a))}$.  Notice that $\text{Im}(\alpha)\cap \text{Dom}(a)$  is a subgraph of $H$.  Thus the domain $\alpha\circ a$ is a subgraph of $H$ and  $\alpha\circ a$ is an isomorphism of  subgraphs isomorphic to a subgraph of $H$.
\end{proof}

\begin{theorem}\label{thmminimal}
Given a graph $G$ and $a\in \Fisg(G)$ where $a:H\to K$, $\langle a \rangle=\bigcap I_i$ where $I_i$ is an ideal of $\Fisg(G)$ containing $a$.
\end{theorem}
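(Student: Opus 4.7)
The plan is to establish the equality $\langle a \rangle = \bigcap I_i$ by proving both inclusions. The containment $\bigcap I_i \subseteq \langle a \rangle$ is almost free: Proposition \ref{propisidealf} has already shown that $\langle a \rangle$ is itself an ideal, and $a \in \langle a \rangle$ because $H$ is trivially isomorphic to a subgraph of $H$ (namely itself), and $K$ is isomorphic to $H$ via $a$. Thus $\langle a \rangle$ appears in the collection $\{I_i\}$ of ideals containing $a$, so its intersection with the others is contained in it.

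The substantive direction is $\langle a \rangle \subseteq \bigcap I_i$, for which I would show that an arbitrary ideal $I$ of $\Fisg(G)$ containing $a$ must contain every element of $\langle a \rangle$. Fix $\varphi:L\to M$ in $\langle a \rangle$. By definition there is a subgraph $L'$ of $H$ and an isomorphism $\sigma:L'\to L$. The plan is to factor $\varphi$ as $\alpha\circ a\circ\beta$ with $\alpha,\beta\in\Fisg(G)$, so that the two-sided ideal property forces $\varphi\in I$. I would take
\[
\beta = \sigma^{-1} : L \longrightarrow L', \qquad \alpha = \varphi\circ\sigma\circ a^{-1}\big|_{a(L')} : a(L') \longrightarrow M.
\]
Each of these is a composition of isomorphisms between subgraphs of $G$, hence lies in $\Fisg(G)$.

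The verification that $\alpha\circ a\circ\beta = \varphi$ uses the explicit composition rule in $\Fisg(G)$, namely $\psi\circ\varphi = \psi\circ\varphi|_{\varphi^{-1}(\Dom(\psi))}$. Because $\Img(\beta)=L'\subseteq H = \Dom(a)$, the composition $a\circ\beta$ involves no non-trivial restriction and equals the map $L\to a(L')$ sending $x$ to $a(\sigma(x))$. Because $\Img(a\circ\beta) = a(L') = \Dom(\alpha)$ exactly, the outer composition $\alpha\circ(a\circ\beta)$ also requires no restriction, and unpacking the definition of $\alpha$ gives $\alpha\circ a\circ\beta = \varphi$ on all of $L$. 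Since $a\in I$ and $I$ is a two-sided ideal, $\varphi = \alpha\circ a\circ\beta \in I$, and because $\varphi\in\langle a\rangle$ was arbitrary, $\langle a\rangle\subseteq I$; intersecting over $I$ yields $\langle a\rangle\subseteq\bigcap I_i$.

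The main obstacle I anticipate is bookkeeping around the domain-restriction convention of composition in $\Fisg(G)$: one has to choose $\alpha$ and $\beta$ precisely so that the intermediate domains and images nest correctly and no implicit restriction silently shrinks the composite. The specific choice above is engineered so that $\Img(\beta)\subseteq\Dom(a)$ and $\Img(a\circ\beta)=\Dom(\alpha)$, after which the identity $\alpha\circ a\circ\beta=\varphi$ is a short chase. Everything else in the proof is formal, relying only on Proposition \ref{propisidealf} and the ideal axioms.
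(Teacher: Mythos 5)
Your proof is correct, and the hard inclusion is handled by the same underlying mechanism as the paper's --- exploiting invertibility and the two-sided ideal property to show any ideal containing $a$ absorbs every isomorphism between copies of subgraphs of $H$ --- but the packaging is genuinely different. The paper never writes down a single factorization; it proceeds in stages: first $a\circ id_{L'}=a|_{L'}\in I$, then for an isomorphism $\varphi:L'\to M$ it uses the idempotent trick $\varphi\circ(a|_{L'})^{-1}\circ(a|_{L'})=\varphi\in I$, then composes with $\psi:L\to L'$ on the right, and finally repeats the idempotent trick once more to capture an arbitrary $\gamma:L\to M$. You instead exhibit the explicit sandwich $\varphi=\alpha\circ a\circ\beta$ with $\beta=\sigma^{-1}$ and $\alpha=\varphi\circ\sigma\circ a^{-1}|_{a(L')}$, carefully arranging the domains so no silent restriction occurs; this is cleaner, needs only one pass, and has the bonus of directly establishing the subsequent corollary $\langle a\rangle=\Fisg(G)\,a\,\Fisg(G)$, which the paper proves separately. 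The only blemish is a notational slip: since $\beta=\sigma^{-1}$, the map $a\circ\beta$ sends $x$ to $a(\sigma^{-1}(x))$, not $a(\sigma(x))$; the subsequent cancellation $\varphi\circ\sigma\circ a^{-1}\circ a\circ\sigma^{-1}=\varphi$ still goes through exactly as you intend.
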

\begin{proof}
Since $\langle a \rangle$ is an ideal containing $a$, we have that $\langle a \rangle\supseteq \bigcap I_i$.

To show the other containment, we let $I$ be an ideal containing $a$.  Let $L'$ be a  subgraph of $H$.  Then $id_{L'}\in \Fisg(G)$ and so $a\circ id_{L'}\in I$.  Since $\text{Im}({id_{L'}})$ is $L'$, we have that $a\circ id_{L'}=a|_{L'}$ and $a\circ id_{L'}$ is an isomorphism from $L'$ to an isomorphic  subgraph of $K$, let us call this subgraph $L''$.  Given any  subgraph of $G$ $M$ isomorphic to $L'$, there is an isomorphism  $\varphi:L'\to M$.  But notice that since $a\circ id_{L'}$ is an isomorphism, it is invertible and moreover $a\circ id_{L'}\in I$.  Thus $\varphi\circ (a\circ id_{L'})^{-1}\circ (a\circ id_{L'})\in I$ and $\varphi\in I$. 

Then consider an induced subgraph of $G$ $L$ isomorphic to $L'$.  There is then an isomorphism $\psi:L\to L'$.  Since $\text{Im}(\psi)=L'=\text{Dom}(\varphi)$, it follows that $\varphi\circ \psi$ is an isomorphism, where $\varphi\circ\psi:L \to M$.  Since $\varphi\in I, \varphi\circ\psi\in I$.

Thus, given any $L, M$ isomorphic to a  subgraph of $H$, there is an isomorphism  $\varphi\circ\psi\in I, \varphi\circ\psi:L\to M$.  So given any isomorphism $\gamma:L\to M$, $\gamma\circ (\varphi\circ\psi)^{-1}\circ (\varphi\circ\psi)\in I$ and $\gamma\in I$.  So it follows that $\langle a \rangle \subseteq I $.  Since $I$ was arbitrarily chosen, $\langle a \rangle\subseteq \bigcap I_i$ and so $\langle a \rangle =  \bigcap I_i$.
\end{proof}

\begin{corollary}
$\langle a \rangle = \Fisg(G)a\Fisg(G)$.
\end{corollary}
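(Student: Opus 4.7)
The plan is to prove the two inclusions separately, using Proposition \ref{propisidealf} and Theorem \ref{thmminimal} as the main engines, so that no new structural work is needed.

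For the inclusion $\Fisg(G)\,a\,\Fisg(G) \subseteq \langle a \rangle$, I would simply invoke Proposition \ref{propisidealf}: it tells us $\langle a \rangle$ is an ideal, and it visibly contains $a$ (take $L = M = H$ and the isomorphism $a$ itself, which is an isomorphism between subgraphs isomorphic to the subgraph $H$ of $H$). Then by the two-sided ideal property, for any $\alpha, \beta \in \Fisg(G)$, first $a \circ \alpha \in \langle a \rangle$ and then $\beta \circ (a \circ \alpha) \in \langle a \rangle$. So every element of $\Fisg(G)\,a\,\Fisg(G)$ lies in $\langle a \rangle$.

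For the reverse inclusion $\langle a \rangle \subseteq \Fisg(G)\,a\,\Fisg(G)$, the plan is to show that $\Fisg(G)\,a\,\Fisg(G)$ is itself an ideal containing $a$, and then quote Theorem \ref{thmminimal}, which characterizes $\langle a \rangle$ as the intersection of all ideals containing $a$. To see $a \in \Fisg(G)\,a\,\Fisg(G)$, write $a = id_K \circ a \circ id_H$, using the identity isomorphisms of $H = \Dom(a)$ and $K = \Img(a)$; these lie in $\Fisg(G)$ by Lemma \ref{idemps}. To see $\Fisg(G)\,a\,\Fisg(G)$ is an ideal, take any element $\beta \circ a \circ \alpha$ and any $\gamma \in \Fisg(G)$; by associativity (established for $\Fisg(G)$ in Section \ref{ISGs}), $\gamma \circ (\beta \circ a \circ \alpha) = (\gamma \circ \beta) \circ a \circ \alpha \in \Fisg(G)\,a\,\Fisg(G)$, and the right multiplication case is symmetric.

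Once both inclusions are in hand, the corollary follows immediately. The proof is essentially a direct assembly of prior results: the only minor obstacle is confirming that composing with the empty map $\mu_0$ causes no trouble, but since $\mu_0 \in \Fisg(G)$ and $\mu_0 = \mu_0 \circ a \circ \mu_0$, the empty map sits in $\Fisg(G)\,a\,\Fisg(G)$ as well, matching its membership in $\langle a \rangle$ observed after Definition \ref{defidealgenf}. There is no genuine obstacle in the argument; the work has already been done in Proposition \ref{propisidealf} and Theorem \ref{thmminimal}.
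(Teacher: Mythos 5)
Your proof is correct and follows essentially the same route as the paper's: one inclusion uses the ideal property of $\langle a \rangle$ established in Proposition \ref{propisidealf}, and the other uses the minimality characterization of Theorem \ref{thmminimal}. The only difference is that you verify directly that $\Fisg(G)a\Fisg(G)$ is an ideal containing $a$ (via $a = id_K \circ a \circ id_H$ and associativity), whereas the paper simply cites this as the standard fact that $\Fisg(G)a\Fisg(G)$ is the principal ideal containing $a$.
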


\begin{proof}
$\Fisg(G)a\Fisg(G)$ is the principle ideal containing $a$ \cite{SGT} and so by Theorem \ref{thmminimal} $\langle a \rangle\subseteq \Fisg(G)a\Fisg(G)$.  Conversely, given any $\phi \circ a\circ \rho\in \Fisg(G)a\Fisg(G)$, $\phi \circ a\in \langle a \rangle$ by the arguments in Proposition \ref{propisidealf}, and so by the same arguments, $(\phi \circ a)\circ \rho\in \langle a \rangle$.  Thus $\Fisg(G)a\Fisg(G)\subseteq \langle a \rangle$.
\end{proof}
In other words $\langle a \rangle$ is the minimal ideal of $\Fisg(G)$ containing $a$.

We notice that an ideal generated by a single element of $\Fisg(G)$ is totally determined by the domain subgraph of that element, and all of its subgraphs.  Since ideals are closed under unions and intersections, it is easy to see that any ideal is the union of all the principle ideals of its elements.  Thus one can see that an ideal is best understood by the subgraphs which constitute the domains of these functions.  In order to deal with these subgraphs in a clear way, we introduce the notion of a basis.

\begin{definition}
Let $G$ be a graph and let $I$ be an ideal of $\Fisg(G)$. Let $\mathcal{B}$ be a  family of graphs where given any $a\in I, a:H\to K$ $H$ is isomorphic to a subgraph of $B_i\in \mathcal{B}$.  Moreover, given any $B_i\in \mathcal{B}$, there is an $\alpha\in I$ where $B_i$ is the domain of $\alpha$.  We call $\mathcal{B}$ a \textit{generating set of graphs} of $I$.  If $\mathcal{B}$ is minimal, we say $\mathcal{B}$ is a basis of $I$.
\end{definition}

We can think of a basis of an ideal $I$ either as the minimal collection of subgraphs whose subgraphs are domains of elements of $I$, or we can think of the elements of $\mathcal{B}$ as the maximal subgraphs who are domains of elements of $I$.  We prove some essential properties of the basis.

\begin{lemma}\label{lembasisf}
Let $G$ be a graph and let $I$ be an ideal for $\Fisg{(G)}$.  Let $\mathcal{C}$ be a finite family of graphs which is a generating set of graphs for $I$.   Then there is a $\mathcal{B}\subseteq \mathcal{C}$ where $\mathcal{B}$ is a basis for $I$.
\end{lemma}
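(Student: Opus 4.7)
The plan is a greedy pruning argument: starting from $\mathcal{C}$, iteratively discard any element that is already redundant in the generating-set sense, and show the resulting collection is still generating and is now minimal.

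More precisely, I would set $\mathcal{B}_0 := \mathcal{C}$ and, at stage $k$, look for some $B_i \in \mathcal{B}_k$ that is isomorphic to a subgraph of some other $B_j \in \mathcal{B}_k$ (with $j \neq i$). If such a $B_i$ exists, set $\mathcal{B}_{k+1} := \mathcal{B}_k \setminus \{B_i\}$; otherwise stop and let $\mathcal{B}$ be the current family. Because $\mathcal{C}$ is finite the process terminates in finitely many steps, and $\mathcal{B} \subseteq \mathcal{C}$.

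Next I would verify that each $\mathcal{B}_k$ (and hence $\mathcal{B}$) is a generating set of graphs for $I$. This requires checking the two clauses of the definition. Take any $a \in I$ with $a:H\to K$. By the generating-set property for $\mathcal{B}_k$ (inductively), $H$ is isomorphic to a subgraph of some $B_\ell \in \mathcal{B}_k$. If $\ell \neq i$ then $B_\ell \in \mathcal{B}_{k+1}$ and we are done; if $\ell = i$, then composing a subgraph embedding $H \hookrightarrow B_i$ with an embedding of $B_i$ (up to isomorphism) into $B_j$ exhibits $H$ as isomorphic to a subgraph of $B_j \in \mathcal{B}_{k+1}$. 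The second clause (each $B \in \mathcal{B}_{k+1}$ is the domain of some element of $I$) is immediate, since $\mathcal{B}_{k+1} \subseteq \mathcal{B}_k \subseteq \mathcal{C}$ and $\mathcal{C}$ already had this property.

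Finally, I would argue minimality of the terminal $\mathcal{B}$. By the stopping condition, no $B_i \in \mathcal{B}$ is isomorphic to a subgraph of any $B_j \in \mathcal{B}$ with $j \neq i$. Suppose $\mathcal{B}' \subsetneq \mathcal{B}$ were still a generating set, and pick $B_i \in \mathcal{B} \setminus \mathcal{B}'$. Because $B_i \in \mathcal{C}$, there exists $\alpha \in I$ with $\Dom(\alpha) = B_i$; the generating-set property for $\mathcal{B}'$ then forces $B_i$ to be isomorphic to a subgraph of some $B_j \in \mathcal{B}'$ with $j \neq i$, contradicting the stopping condition. Hence $\mathcal{B}$ is minimal, i.e., a basis of $I$.

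There is no real obstacle here: the argument is essentially the standard extraction of a maximal antichain from a finite partially preordered set. The only mild subtlety is to keep straight that the ordering in play is ``isomorphic to a subgraph of,'' not literal subgraph containment, so the inductive verification of the first generating-set clause must compose two subgraph isomorphisms rather than appeal to a literal inclusion.
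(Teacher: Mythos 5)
Your proof is correct. It differs from the paper's argument in one respect worth noting: the paper simply inducts on $|\mathcal{C}|$, at each step deleting any element whose removal leaves the family generating (such an element exists by the very definition of non-minimality), so the terminal family is minimal by construction and nothing further needs to be checked. You instead prune according to the concrete criterion ``$B_i$ is isomorphic to a subgraph of some other $B_j$,'' which obliges you to carry out two extra verifications: that this deletion preserves the generating property (via transitivity of the ``isomorphic to a subgraph of'' preorder), and that the terminal antichain really is minimal (using the second clause of the definition to produce an $\alpha\in I$ with domain $B_i$, whence $B_i$ would have to embed in some other surviving $B_j$, contradicting the stopping condition). Both verifications go through as you describe. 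What your route buys is the additional observation that a basis is exactly a generating antichain under subgraph embedding, a fact the paper only extracts later (in Theorem \ref{thmuniquef} and the proposition on $\bigcup_{i}\langle id_{H_i}\rangle$); what the paper's route buys is brevity.
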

\begin{proof}
We begin with induction on $|\mathcal{C}|$.  If $|\mathcal{C}|=1$, then $\mathcal{C}=\{C_1\}$ is a singleton and is clearly minimal.  So we assume this is true for $\mathcal{C}$ where $|\mathcal{C}|<n$ and consider the case where $|\mathcal{C}|=n$.  If $\mathcal{C}$ is minimal with respect to being a generating set of graphs for $I$, then we let $\mathcal{B}=\mathcal{C}$ and we are done.  Otherwise there is a $C_j$, without loss of generality $C_n$ where $\mathcal{C}\backslash \{C_n\}$ remains a generating set of graphs for $I$.  By induction, there is a $\mathcal{B}\subseteq \mathcal{C}\backslash \{C_n\}$ minimal with respect to this property.  Thus $\mathcal{B}$ is a basis for $I$.
\end{proof}

\begin{corollary}
Given a graph $G$ and $I$ an ideal of $Fisg(G)$, $I$ has a basis $\mathcal{B}$.
\end{corollary}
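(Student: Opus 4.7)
The plan is to reduce the corollary to Lemma \ref{lembasisf} by exhibiting a single concrete finite generating set of graphs for $I$, after which the lemma hands us a basis for free. The key observation is that the paper restricts attention to finite graphs, so $G$ has only finitely many subgraphs.

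First, I would define $\mathcal{C}$ to be the family of subgraphs of $G$ that arise as domains of elements of $I$, that is, $\mathcal{C}=\{\Dom(a):a\in I\}$. Since $G$ is finite, the collection of all subgraphs of $G$ is finite, and hence $\mathcal{C}$ is finite. Next I would verify that $\mathcal{C}$ is a generating set of graphs for $I$ in the sense of the definition preceding Lemma \ref{lembasisf}: for any $a\in I$ with $a:H\to K$, the graph $H$ is literally an element of $\mathcal{C}$, hence isomorphic to a subgraph of some member of $\mathcal{C}$ (namely itself); conversely, every $B\in \mathcal{C}$ was placed there because it is the domain of some element of $I$. Both conditions of the definition are therefore satisfied.

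Having produced a finite generating set of graphs $\mathcal{C}$, I would then invoke Lemma \ref{lembasisf} directly to extract $\mathcal{B}\subseteq \mathcal{C}$ that is minimal among generating sets of graphs for $I$; by definition, such a $\mathcal{B}$ is a basis of $I$, completing the proof.

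There is no serious obstacle here: the entire content of the corollary is to convert the conditional statement of Lemma \ref{lembasisf} (if $I$ admits a finite generating set of graphs, then it admits a basis) into an unconditional existence statement, and the only ingredient required is the finiteness of the subgraph lattice of $G$, which is immediate from the standing assumption that $G$ is finite. The mildest care needed is to check that $\mathcal{C}$ genuinely meets both clauses of the definition of generating set of graphs, which is essentially tautological given how $\mathcal{C}$ was built.
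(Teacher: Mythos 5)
Your proposal is correct and follows exactly the same route as the paper: take $\mathcal{C}$ to be the set of domains of elements of $I$, observe it is finite because $G$ is finite, and apply Lemma \ref{lembasisf}. The only difference is that you spell out the (tautological) verification that $\mathcal{C}$ is a generating set of graphs, which the paper leaves implicit.
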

\begin{proof}
We let $\mathcal{C}:=\{H\ |\ \exists a\in I, a:H\to K\}$.  Since $G$ is finite, $\mathcal{C}$ is finite, and by Lemma \ref{lembasisf} there is a basis for $I$, $\mathcal{B}\subseteq \mathcal{C}$.
\end{proof}

\begin{proposition}\label{propsubf}
Let $G$ be a graph and let $I, J$ be ideals of $\Fisg(G)$ where $I\subseteq J$.  Let $\mathcal{B}=\{B_1, \ldots B_m\}$ be a generating set for $J$.  Then there is a basis for $I$, $\mathcal{C}=\{C_1, \ldots, C_m\}$ where $C_i$ is isomorphic to a subgraph of some $B_j\in \mathcal{B}$.
\end{proposition}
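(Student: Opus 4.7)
The plan is to produce, for each $B_j \in \mathcal{B}$, a subgraph of $B_j$ that is the domain of some element of $I$ and is as large as possible with that property, then apply Lemma \ref{lembasisf} to trim the resulting family down to a basis.

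The first step is the key transport observation. Given any $a \in I$ with $a:H\to K$, since $I\subseteq J$ and $\mathcal{B}$ is a generating set for $J$, the graph $H$ is isomorphic to some subgraph $H'$ of some $B_j$. Any isomorphism $\varphi:H' \to H$ lies in $\Fisg(G)$, so since $I$ is an ideal the composition $a\circ \varphi$ belongs to $I$ and has domain exactly $H'$. Thus every isomorphism class of domains appearing in $I$ is realized by an actual subgraph of some $B_j$.

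For each $j$, let $\mathcal{S}_j$ denote the set of subgraphs of $B_j$ that occur as domains of elements of $I$. Since $B_j$ is finite, $\mathcal{S}_j$ is a finite poset under subgraph containment and has maximal elements; I choose $C_j$ to be such a maximal element (with $C_j=\emptyset$ if $\mathcal{S}_j$ is empty, which is permitted since the empty graph is allowed in this paper). By the transport step every domain appearing in $I$ is isomorphic to a subgraph of some $B_j$, hence to a subgraph of some $C_j$, and each $C_j$ is itself the domain of some element of $I$. So $\{C_1,\ldots,C_m\}$ is a finite generating set of graphs for $I$, and Lemma \ref{lembasisf} yields a basis $\mathcal{C}$ each of whose elements is a subgraph of some $B_j$.

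The main obstacle is the clause ``hence to a subgraph of some $C_j$'': if $\mathcal{S}_j$ has several incomparable maximal elements, a single $C_j$ per $B_j$ will not cover every isomorphism class of domain in $\mathcal{S}_j$. I would handle this by letting the construction above range over \emph{all} maximal elements of each $\mathcal{S}_j$, producing a finite generating set of subgraphs of the $B_j$'s indexed by pairs $(j,\text{maximal})$. Applying Lemma \ref{lembasisf} then trims this to a basis whose elements are still subgraphs of some $B_j$, matching the conclusion of the proposition (with the enumeration $C_1,\ldots,C_m$ interpreted in this slightly extended sense).
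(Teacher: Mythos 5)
Your argument is correct, but it takes a noticeably longer route than the paper's, which is essentially a one-line observation: take \emph{any} basis $\mathcal{C}$ of $I$ (one exists by the corollary to Lemma \ref{lembasisf}); each $C_i$ is by definition the domain of some $a\in I$, and since $I\subseteq J$ this $a$ also lies in $J$, so the definition of ``generating set for $J$'' immediately gives that $C_i$ is isomorphic to a subgraph of some $B_j$. No construction or transport is needed, because the proposition only asks that the $C_i$ be \emph{isomorphic to} subgraphs of the $B_j$. Your proof instead builds a basis whose elements are \emph{literal} subgraphs of the $B_j$'s: the transport step ($a\circ\varphi\in I$ with domain $H'\subseteq B_j$, valid since the $B_j$ are themselves subgraphs of $G$) is sound, you correctly identify and repair the flaw in taking only one maximal element per $B_j$, and Lemma \ref{lembasisf} applies since the resulting generating set is finite. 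So you end up proving a slightly stronger statement at the cost of machinery the paper avoids; the mismatch between your index set and the stated enumeration $C_1,\ldots,C_m$ is harmless, as the paper's own statement is already sloppy in reusing $m$ for both families.
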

\begin{proof}
Let $\mathcal{C}=\{C_1, \ldots C_m\}$ be a basis for $I$.  Consider $C_i$, there is an $a\in I, a:C_i\to K$.  Thus $C_i$ is the induced subgraph of $H'$ where $H'$ is a subgraph of $G$ and there is a $b\in J$ where $b:H'\to K'$.  Since $\mathcal{B}$ is a generating set for $J$, there is a $B_j$ where $H'$ is isomorphic to a  subgraph of $B_j$.  Thus $C_i$ is isomorphic to a  subgraph of $B_j$.
\end{proof}

\begin{proposition}\label{propunionf}
Given a graph $G$ and $I, J$ ideals of $\text{Iisg}(G)$ with basis $\mathcal{B}, \mathcal{C}$ respectively, then $\mathcal{B}\cup \mathcal{C}$ is a generating set of graphs for $I\cup J$.
\end{proposition}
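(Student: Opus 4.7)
The plan is to verify directly the two defining properties of a generating set of graphs for the ideal $I \cup J$, which is indeed an ideal by Lemma \ref{lemideal}. Since $\mathcal{B} \cup \mathcal{C}$ is a set-theoretic union and every element of $I \cup J$ lies in at least one of $I$ or $J$, the verification should reduce almost immediately to the hypotheses that $\mathcal{B}$ generates $I$ and $\mathcal{C}$ generates $J$.

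For the first property, I would fix an arbitrary $a \in I \cup J$ with $a : H \to K$ and split into cases. If $a \in I$, then because $\mathcal{B}$ is a (basis, hence) generating set of graphs for $I$, $H$ is isomorphic to a subgraph of some $B_i \in \mathcal{B} \subseteq \mathcal{B} \cup \mathcal{C}$, as required. The case $a \in J$ is symmetric, using $\mathcal{C}$ instead. For the second property, I would fix an arbitrary $D \in \mathcal{B} \cup \mathcal{C}$ and again split into cases: if $D \in \mathcal{B}$, the generating property of $\mathcal{B}$ for $I$ gives some $\alpha \in I$ whose domain is $D$, and then $\alpha \in I \cup J$; the case $D \in \mathcal{C}$ is symmetric.

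There is no real obstacle here; the statement only asserts that $\mathcal{B} \cup \mathcal{C}$ is a generating set, not a basis. In general it need not be minimal, since elements of $\mathcal{B}$ could be isomorphic to subgraphs of elements of $\mathcal{C}$ (or vice versa). If a basis for $I \cup J$ is wanted, one would then invoke Lemma \ref{lembasisf} to prune $\mathcal{B} \cup \mathcal{C}$ down to a minimal generating subfamily, but this is beyond what the proposition requires.
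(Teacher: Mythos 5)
Your proposal is correct and follows essentially the same case-split argument as the paper's proof ($a\in I$ or $a\in J$, then appeal to the generating property of $\mathcal{B}$ or $\mathcal{C}$ respectively). You are in fact slightly more thorough: you also verify the second defining condition of a generating set (that every $D\in\mathcal{B}\cup\mathcal{C}$ is the domain of some element of $I\cup J$), which the paper's proof leaves implicit, and your remark that minimality may fail is accurate and consistent with the statement claiming only a generating set, not a basis.
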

\begin{proof}
Let $a\in I\cup J$, then $a\in I $ or $a\in J$.  If $a\in I$ then $a:H\to K$ where $H$ is isomorphic to a subgraph of $B_i\in \mathcal{B}$ otherwise if $a\in J$ then $a:M\to L$ where $M$ is isomorphic to a subgraph of $C_j\in \mathcal{C}$.  Either way, the domain of $a$ is isomorphic to a subgraph of an element of $\mathcal{B}\cup \mathcal{C}$.
\end{proof}

\begin{theorem}\label{thmuniquef}
Let $G$ be a graph, and let $I$ be an ideal of $\Fisg(G)$ with basis $\mathcal{B}$.  Then $\mathcal{B}$ is unique up to isomorphism.
\end{theorem}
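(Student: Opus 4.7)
The plan is to show that if $\mathcal{B}$ and $\mathcal{B}'$ are two bases of $I$, then there is a bijection between them pairing each element with an isomorphic counterpart. I would break the argument into three steps: first, no two distinct elements of a single basis can be isomorphic; second, each element of one basis is isomorphic to a unique element of the other; and third, these assignments constitute mutually inverse bijections.

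I would first establish that distinct elements of any single basis are pairwise non-isomorphic. If $B_1, B_2 \in \mathcal{B}$ are distinct and $B_1 \cong B_2$, then any graph isomorphic to a subgraph of $B_1$ is also isomorphic to a subgraph of $B_2$. Consequently $\mathcal{B}\setminus\{B_1\}$ still covers every domain in $I$, and its remaining elements are still realized as domains of elements of $I$, so it remains a generating set of graphs. This contradicts the minimality of $\mathcal{B}$.

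Next, I would construct the bijection. Fix $B \in \mathcal{B}$. Since $B$ is the domain of some $a \in I$ and $\mathcal{B}'$ is a generating set, $B$ is isomorphic to a subgraph of some $B' \in \mathcal{B}'$. Applying the same reasoning to $B'$ with $\mathcal{B}$ as the generating set yields $B'' \in \mathcal{B}$ with $B'$ isomorphic to a subgraph of $B''$, and composing the two embeddings shows $B$ is isomorphic to a subgraph of $B''$. If $B \neq B''$, then every domain of $I$ covered by $B$ is already covered by $B''$, so $\mathcal{B}\setminus\{B\}$ would be a generating set, contradicting minimality. Hence $B = B''$, and the chain $B \hookrightarrow B' \hookrightarrow B$ of subgraph embeddings of finite multigraphs forces, by comparing vertex and edge counts at the endpoints, each embedding to be a bijection on both vertices and edges, hence a graph isomorphism. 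In particular $B \cong B'$. Defining $f:\mathcal{B}\to\mathcal{B}'$ by $B\mapsto B'$, and symmetrically $g:\mathcal{B}'\to\mathcal{B}$, the pairwise non-isomorphism result from the first step guarantees these maps are well-defined single-valued assignments, and the symmetric construction provides an inverse up to isomorphism.

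The main obstacle is the middle step: extracting from the chain $B \hookrightarrow B' \hookrightarrow B$ the conclusion $B \cong B'$ in the setting of finite multigraphs with loops. This requires observing that a subgraph embedding is injective on both the vertex set and the edge set, so equal vertex and edge counts at the two endpoints of the chain force each embedding to be bijective on both, hence a graph isomorphism. The minimality-driven closing argument also requires care, since one must verify that deleting the candidate redundant element $B$ preserves both the covering condition for arbitrary domains in $I$ and the domain-realization condition for the remaining basis elements; the latter is automatic, but the former is precisely where the transitivity of "isomorphic to a subgraph of" enters.
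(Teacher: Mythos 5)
Your proof is correct and follows essentially the same route as the paper's: chase an element of one basis into the other via ``isomorphic to a subgraph of,'' chase back into the first basis, and invoke minimality. You are in fact somewhat more careful than the paper --- you explicitly treat the case where the chain returns to the starting element (where a naive appeal to minimality gives no contradiction and one must instead use the finiteness argument $B \hookrightarrow B' \hookrightarrow B \Rightarrow B \cong B'$), and you upgrade the conclusion to an explicit bijection between the two bases.
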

\begin{proof}
Let $\mathcal{B}=\{B_1, \ldots B_m\}$ and $\mathcal{C}=\{C_1, \ldots C_n\}$ be basis of $I$.  Suppose that $B_m$ is not isomorphic to any $C_j\in \mathcal{C}$.  Then there must be a $C_k$ where $B_m$ is isomorphic to a subgraph of $C_k$.  Since $C_k\in \mathcal{C}$, $C_k$ is isomorphic to the domain of some $a\in I$.  Thus there is a $B_i$ where $C_k$ is isomorphic to a subgraph of $B_i$.  But then $B_m$ is isomorphic to a subgraph of $B_i$, contradicting the minimality of $\mathcal{B}$.
\end{proof}

\begin{proposition}
Let $H_1, \ldots H_n$ be a collection of subgraphs of $G$ where no $H_i$ is a subgraph of $H_j$ when $i\neq j$.  Then $\displaystyle\bigcup_{i=1}^n \langle id_{H_i} \rangle$ has basis $\mathcal{B}=\{H_1, \ldots, H_n\}$.
\end{proposition}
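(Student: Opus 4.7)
My approach is to verify the two defining conditions of a basis: that $\mathcal{B}=\{H_1,\ldots,H_n\}$ is a generating set of graphs for $I:=\bigcup_{i=1}^n \langle id_{H_i}\rangle$, and that $\mathcal{B}$ is minimal among such generating sets.

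First, I would observe that each singleton $\{H_i\}$ is trivially a basis for $\langle id_{H_i}\rangle$: by Definition \ref{defidealgenf}, every element of $\langle id_{H_i}\rangle$ has domain isomorphic to a subgraph of $H_i$, and $H_i$ itself is realized as the domain of $id_{H_i}\in\langle id_{H_i}\rangle$, while a singleton is automatically minimal. An inductive application of Proposition \ref{propunionf} then yields that $\mathcal{B}=\{H_1,\ldots,H_n\}$ is a generating set of graphs for $I$.

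For minimality I would argue by contradiction. Suppose some proper subfamily $\mathcal{B}'\subsetneq\mathcal{B}$ is still a generating set of graphs for $I$; without loss of generality $H_k\notin\mathcal{B}'$. Since $id_{H_k}\in\langle id_{H_k}\rangle\subseteq I$ has domain $H_k$, the definition of a generating set forces $H_k$ to be isomorphic to a subgraph of some $H_j\in\mathcal{B}'$ with $j\neq k$, contradicting the standing hypothesis. Hence $\mathcal{B}$ itself is already minimal, so it is a basis, and by Theorem \ref{thmuniquef} it is the unique basis up to isomorphism.

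The main subtlety, and the only potential obstacle, is interpretational: the hypothesis is phrased as ``no $H_i$ is a subgraph of $H_j$'', but the notion of generating set built into the paper uses \emph{isomorphic to a subgraph}. For the minimality step to close, the hypothesis must be read in the same up-to-isomorphism sense (which is consistent with the convention of Theorem \ref{thmuniquef}); otherwise one could not rule out an $H_k$ embedding into some $H_j$ merely up to isomorphism. Once that reading is fixed, the argument reduces to a single invocation of Proposition \ref{propunionf} together with the one-line contradiction above.
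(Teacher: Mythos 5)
Your proof is correct and follows essentially the same route as the paper's: establish that $\mathcal{B}$ is a generating set (the paper does this by direct inspection of the elements of $I$ rather than by induction on Proposition~\ref{propunionf}, but the content is identical) and then observe that removing any $H_k$ would leave $id_{H_k}\in I$ with domain $H_k$ not embeddable in any remaining $H_j$. Your remark that the hypothesis must be read up to isomorphism for the minimality step to close is apt; the paper's own proof tacitly assumes that reading, as its concluding paragraph (``no element \ldots is isomorphic to a subgraph of another'') confirms.
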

\begin{proof}
First, note that $\langle id_{H_i} \rangle$ contains a subgraph isomorphism whose domain is $H_i$.  Thus, $\langle id_{H_i} \rangle$ is the collection of all subgraph isomorphisms between any graphs isomorphic to a subgraph of $H_i$.  Let $I:=\displaystyle\bigcup_{i=1}^n \langle id_{H_i} \rangle$.  Notice that given any $a\in I$ it follows that $a\in \langle id_{H_i} \rangle$ for some $i$ and thus the domain of $a$ is isomorphic to a subgraph of $H_i$.  Thus $\mathcal{B}$ is a generating set for $I$.  But since no $H_i$ is a subgraph of any $H_j, i\neq j$ and each $id_{H_i}\in I$, it follows that $\mathcal{B}$ is minimal, since removing any $H_i$ would contradict $id_{H_i}\in I$.
\end{proof}
This collection of results show that each ideal is determined exactly by its basis, and exhibits a 1-1 correspondence between ideals of $\Fisg(G)$ and collections of subgraphs of $G$ where no element of these collections is isomorphic to a subgraph of another.

Finally, we consider the lattice structure of this ideal as in Theorems \ref{FisgLattice}, \ref{IisgLattice}, \ref{TisgLattice} and Proposition \ref{PisgSemilattice}.

\begin{theorem}\label{thmsemimod}
Let $G$ be a graph and let  $\mathcal{I}$ denote the collection of ideals of $\Fisg(G)$.  Then, $\mathcal{I}$ forms a distributive semimodular lattice under inclusion.
\end{theorem}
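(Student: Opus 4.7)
My plan is to exploit the fact that, by Lemma \ref{lemideal}, the collection $\mathcal{I}$ is closed under arbitrary unions and arbitrary intersections. This immediately tells us that the meet and join of two ideals in $\mathcal{I}$ under inclusion coincide with ordinary set-theoretic intersection and union: $I\wedge J=I\cap J$ and $I\vee J=I\cup J$. The first step is to record this explicitly, which realises $\mathcal{I}$ as a sublattice of the Boolean lattice $\mathcal{P}(\Fisg(G))$ and in particular establishes that it is a lattice under inclusion.

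From this realisation, distributivity is almost automatic. For any $I,J,K\in\mathcal{I}$, the identity $I\cap(J\cup K)=(I\cap J)\cup(I\cap K)$ holds at the level of underlying sets, and both sides are ideals by Lemma \ref{lemideal}, so the identity passes to $\mathcal{I}$. I would write this as a single sentence.

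For upper semimodularity I would appeal to the classical chain of implications: every distributive lattice is modular (for $a\leq c$ one has $a\vee (b\wedge c)=(a\vee b)\wedge(a\vee c)=(a\vee b)\wedge c$), and every modular lattice is upper semimodular. The latter implication admits a short direct verification in our setting: assuming $I$ covers $I\cap J$ and $J\subseteq K\subseteq I\cup J$, one notes that $I\cap K$ lies in the two-element interval $[I\cap J,I]$, and a one-line application of the modular identity in each of the two cases $I\cap K=I\cap J$ and $I\cap K=I$ forces $K=J$ or $K=I\cup J$, respectively, showing $I\cup J$ covers $J$. The main obstacle, if there is one, is purely expository---deciding how much of this standard lattice-theoretic folklore to reproduce versus cite; no new content specific to $\Fisg(G)$ is required beyond Lemma \ref{lemideal}. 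An alternative route via the basis formalism of Propositions \ref{propsubf} and \ref{propunionf}, interpreting a covering relation combinatorially as adding one graph (up to isomorphism) to a basis, is available but strictly longer than the Boolean-sublattice route above.
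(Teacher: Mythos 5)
Your proof is correct, and the distributivity half is essentially identical to the paper's: both observe via Lemma \ref{lemideal} that $\mathcal{I}$ is closed under union and intersection, hence is a sublattice of the Boolean lattice $\mathcal{P}(\Fisg(G))$ and therefore distributive. Where you diverge is semimodularity. You invoke the standard chain distributive $\Rightarrow$ modular $\Rightarrow$ upper semimodular (and your sketched direct verification of the second implication --- locating $I\cap K$ in the two-element interval $[I\cap J, I]$ and applying the modular identity in each case --- is sound). The paper instead gives a hands-on contrapositive argument specific to $\Fisg(G)$: given $B < C < A\cup B$, it picks $\psi\in C\setminus B$ (necessarily $\psi\in A$) and exhibits $\langle\psi\rangle\cup(A\cap B)$ as an ideal strictly between $A\cap B$ and $A$, so that $A$ fails to cover $A\cap B$. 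Your route is shorter and makes transparent that, once distributivity is established, the semimodularity assertion carries no additional content; the paper's route produces an explicit witnessing ideal and uses the machinery of principal ideals $\langle\psi\rangle$ developed earlier in the section, but proves nothing that your general lattice-theoretic argument does not. Either is acceptable; since the paper already cites Birkhoff for distributivity, citing (or reproducing in two lines, as you do) the folklore implication to semimodularity is equally legitimate.
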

\begin{proof}
To show that $\mathcal{I}$ is a distributive lattice, we notice that by Lemma \ref{lemideal}, the elements of $\mathcal{I}$ are closed under union and intersection.  Thus it follows that $\mathcal{I}$ is a distributive lattice \cite{Birkhoff}, where given $A, B\in \mathcal{I}$, $A\wedge B=A\cap B$ and $A\vee B=A\cup B$.

To show semimodularity, notice Let $A, B\in \mathcal{I}$ such that $A\vee B$ does not cover $B$. That is, there is a $C\in \mathcal{I}$ such that $B<C<A\cup B$.  Thus there is a $\psi\in C\backslash B$ and since $C\subseteq A\cup B$ we have that $\psi\in A$.  But $\langle \psi\rangle \neq A$ or else $B\subseteq C, \langle \psi \rangle \subseteq C$ and $A\cup B \subseteq C$.

Thus consider $\langle \psi \rangle \cup (A\cap B)$.  By \ref{lemideal} this is an ideal of $\Fisg(G)$.  It is clear that $A\cap B\subseteq \langle \psi \rangle \cup (A\cap B)$, but moreover this containment is strict since $\psi\not\in B$.  Similarly $\langle \psi \rangle \cup (A\cap B)\subseteq A$ but this containment is strict, else if $\langle \psi \rangle \cup (A\cap B)=A$, then $(\langle \psi \rangle \cup (A\cap B))\cup B)=A\cup B$, but $\langle \psi \rangle \cup (A\cap B)\cup B=\langle \psi \rangle\cup B=C\neq A\cup B$.  Thus $A$ does not cover $A\cap B$ and $\mathcal{I}$ is semimodular.

\end{proof}

\begin{proposition}\label{propatomic}
Let $G$ be a graph other than $K_1$, and let $\mathcal{I}$ be the ideals of $\Fisg(G)$.  Then the lattice of $\mathcal{I}$ ordered by inclusion is not atomic.
\end{proposition}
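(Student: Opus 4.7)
The plan is to classify the atoms of $\mathcal{I}$ and then exhibit an ideal that is not expressible as a join of atoms; I take ``atomic'' in the combinatorial sense, meaning every non-minimum element is a join of atoms, as in \cite{Stanley}. By Theorem~\ref{thmuniquef} together with the basis-correspondence developed in the previous results, ideals of $\Fisg(G)$ are in bijection with antichains of subgraph-isomorphism types of $G$. Consequently, an atom of $\mathcal{I}$ corresponds to a singleton basis $\{H\}$ where $H$ covers the empty graph in the subgraph-isomorphism poset.

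First, I would observe that the only graph covering $\emptyset$ in this poset is $K_1$: any graph carrying any additional structure (a second vertex, an edge, or a loop) contains $K_1$ as a proper subgraph, so $K_1$ is the unique minimal non-empty isomorphism type. Thus $\langle id_{K_1}\rangle$ is the only candidate for an atom. Second, I would verify it really is an atom, and in fact the unique one, by showing that every ideal $I \neq \{\mu_0\}$ contains $\langle id_{K_1}\rangle$. Given such $I$, pick any $\varphi \in I$ with $\Dom(\varphi)$ non-empty, fix $v \in V(\Dom(\varphi))$, and form $\varphi \circ id_{\{v\}}$ where $\{v\}$ denotes the loop-free single-vertex subgraph at $v$. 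By the ideal property $I \cdot \Fisg(G) \subseteq I$ this composition lies in $I$, and by direct computation it is a vertex-to-vertex isomorphism; its principal ideal is $\langle id_{K_1}\rangle$ by Definition~\ref{defidealgenf}, so $\langle id_{K_1}\rangle \subseteq I$. Third, because $G \neq K_1$ the graph $G$ has either a second vertex or a loop, so it contains a subgraph $H$ with $H \not\cong K_1$; then $\langle id_H\rangle \supsetneq \langle id_{K_1}\rangle$, and since $\langle id_{K_1}\rangle$ is the only atom, no non-empty join of atoms can equal $\langle id_H\rangle$. Hence $\mathcal{I}$ is not atomic.

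The main obstacle will be the composition in the second step. Since $G$ may have loops and multiple edges, the loop-free single-vertex subgraph $\{v\}$ is genuinely different from the induced subgraph $G[\{v\}]$, and only the former is guaranteed to give a morphism lying in $\langle id_{K_1}\rangle$. Keeping this distinction precise and checking that $\varphi \circ id_{\{v\}}$ really restricts to a non-empty vertex-to-vertex isomorphism (so that its principal ideal is exactly $\langle id_{K_1}\rangle$, rather than a larger ideal involving a looped single-vertex graph) is where the argument demands the most care.
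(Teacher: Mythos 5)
Your proof is correct, but it attacks the lattice from the opposite end to the paper. The paper works at the top: it forms $X=\bigcup_{H\in S}\langle id_H\rangle$ over all proper subgraphs $H$ of $G$, observes via the basis machinery that $id_G\notin X$, and concludes that $\Fisg(G)=\langle id_G\rangle$ is join-irreducible among proper ideals while failing to be an atom itself, hence is not a join of atoms. You instead work at the bottom: you show $\langle id_{K_1}\rangle$ is the \emph{unique} atom (every nonzero ideal contains it, via $\varphi\circ id_{\{v\}}\in I$ and Theorem~\ref{thmminimal}), so the only possible joins of atoms are the minimum and $\langle id_{K_1}\rangle$ itself, and any strictly larger ideal --- which exists because $G\neq K_1$ --- witnesses non-atomicity. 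Both arguments are sound; yours buys a sharper structural fact (the ideal lattice has exactly one atom, so \emph{every} ideal above $\langle id_{K_1}\rangle$ fails to be a join of atoms, not just the top element), while the paper's is shorter because it leans directly on the already-established basis results rather than re-examining compositions. Your caution about loops is well placed but resolves exactly as you suspect: taking the loop-free single-vertex subgraph $\{v\}$ forces $\varphi\circ id_{\{v\}}$ to be a $K_1$-to-$K_1$ isomorphism, so its principal ideal is $\langle id_{K_1}\rangle$ and not the larger ideal generated by a looped vertex. The one shared blemish is the degenerate case $G=\emptyset$ (which the paper permits as a graph): there $\mathcal{I}$ is a chain of length at most one and the statement fails; the paper's proof also silently assumes $G$ has a proper subgraph, so you are in good company, but a clause excluding the empty graph would make both arguments airtight.
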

\begin{proof}
Let $S$ denote the collection of proper subgraphs of $G$ and consider $X=\displaystyle\bigcup_{H\in S} \langle id_H \rangle$.  By Lemma \ref{lemideal} this is an ideal of $\Fisg(G)$ with a generating set of graphs $S$.  Let $\mathcal{B}\subseteq S$ be a basis for $X$.   So notice that $id_G\not \in X$, or else $G$ would be the subgraph of some element of $\mathcal{B}$, and all elements of $\mathcal{B}$ are proper subgraphs of $G$.  But clearly $id_G\in \langle id_G \rangle=\Fisg(G)$.  Since $G$ has proper subgraphs, $\langle id_G \rangle$ is not an atom, and since $\langle id_G \rangle$ is not the join of any other elements of $\mathcal{I}$, $\mathcal{I}$ is not atomic.
\end{proof}

Notice that the results of this section can be easily extended to $\Iisg(G)$, $\Tisg(G,v)$ and $\Pisg(G,v)$ with similar definitions.  In the case of $\Iisg(G)$, we replace the notion of subgraph with induced subgraph.  Since the induced subgraphs of induced subgraphs are in fact induced subgraphs of $G$, as are the intersection of such subgraphs, by mimicking the arguments of Proposition \ref{propisidealf} and Theorem \ref{thmminimal} we have:

\begin{theorem}
Let $G$ be a graph, $a\in \Iisg(G)$, consider the ideal generated by $a$, $\langle a \rangle:=\{\varphi:H\to K\}$ where $H, K$ are induced subgraphs of $G$ isomorphic to an induced subgraph of the domain of $a$.  Then $\langle a \rangle$ is an ideal of $\Iisg(G)$ and is the minimal such ideal containing $a$.
\end{theorem}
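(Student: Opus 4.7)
The plan is to transplant the proofs of Proposition \ref{propisidealf} and Theorem \ref{thmminimal} from $\Fisg(G)$ into $\Iisg(G)$ essentially verbatim, because the two facts that drove those proofs---that the intersection of two subgraphs of $G$ appearing as a domain--image overlap is again a subgraph, and that $id_L\in\Fisg(G)$ for any subgraph $L$---both have direct induced-subgraph analogues. The paper has already observed that the intersection of two vertex induced subgraphs of $G$ is again vertex induced, and by Lemma \ref{idemps} combined with the remarks that follow it, $id_L\in\Iisg(G)$ whenever $L$ is an induced subgraph of $G$. Everything else will be bookkeeping.

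First I would check that $\langle a\rangle$ is an ideal. Writing $a:H\to K$ and taking any $\alpha\in\Iisg(G)$, the composition $\alpha\circ a$ is either the empty map or the restriction of $\alpha\circ a$ to $a^{-1}(\Img(a)\cap\Dom(\alpha))$; since this intersection is an induced subgraph of $G$ and $a$ is an isomorphism onto $\Img(a)$, the preimage is an induced subgraph of $H$. Hence $\alpha\circ a$ is an isomorphism whose domain and image are induced subgraphs of $G$ isomorphic to an induced subgraph of $H$, so $\alpha\circ a\in\langle a\rangle$. The composition $a\circ\alpha$ is handled symmetrically, exactly as in Proposition \ref{propisidealf}.

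For minimality I would take any ideal $I\ni a$ and show $\langle a\rangle\subseteq I$ by the same two-step chase used in Theorem \ref{thmminimal}. For any induced subgraph $L'$ of $H$, the identity $id_{L'}$ lies in $\Iisg(G)$, so $a\circ id_{L'}=a|_{L'}\in I$. Given any induced subgraph $M$ of $G$ isomorphic to $L'$, an isomorphism $\varphi:L'\to M$ can be realized as $\varphi\circ(a|_{L'})^{-1}\circ(a|_{L'})$ and hence lies in $I$; a further composition with an isomorphism $\psi:L\to L'$ between induced subgraphs of $G$ produces an element of $I$ from $L$ to $M$, and a final conjugation trick converts an arbitrary $\gamma:L\to M$ in $\langle a\rangle$ into a member of $I$.

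I do not expect a real obstacle. The only point where care is required is verifying that each auxiliary map used in the chase---identities, inverses, and the various compositions---actually lies in $\Iisg(G)$ rather than merely in $\Fisg(G)$. This reduces entirely to confirming that every graph on which we build an identity is vertex induced in $G$, which in turn follows from the induced-subgraph closure facts recalled above. With those in hand, the proofs of Proposition \ref{propisidealf} and Theorem \ref{thmminimal} port across unchanged and deliver both that $\langle a\rangle$ is an ideal of $\Iisg(G)$ and that it is the minimal such ideal containing $a$.
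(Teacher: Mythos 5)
Your proposal is correct and follows exactly the route the paper takes: the paper itself dispatches this theorem by noting that induced subgraphs of induced subgraphs, and intersections thereof, are again induced subgraphs of $G$, and then "mimicking the arguments of Proposition \ref{propisidealf} and Theorem \ref{thmminimal}." Your identification of the two closure facts that make the transplant work is precisely the content of the paper's argument.
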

Similarly, following the arguments of Lemma \ref{lembasisf}, Propositions \ref{propsubf}, \ref{propunionf} and Theorem \ref{thmuniquef} we have:
\begin{theorem}
Let $G$ be a graph and $I$ an ideal of $\Iisg(G)$, and then let $\mathcal{B}$ be a collection of induced subgraphs of $G$ so that given any $a\in I$, the domain of $a$ is isomorphic to an induced subgraph of some $B_i\in \mathcal{B}$, and for each $B_i\in \mathcal{B}$, there is an $a\in I$ where the domain of $a$ is isomorphic to $B_i$.  We call $\mathcal{B}$ a generating set of graphs for $I$, and if $\mathcal{B}$ is minimal, then $\mathcal{B}$ is called a basis of $I$.

Then we have that each ideal $I$ of $\Iisg(G)$ has a basis, that basis is unique up to isomorphism, each generating set of graphs for an ideal $I$ contains a basis, the union of ideals $I, J$ has a generating set which is the union of the basis, and given any ideals $I, K$ where $I\subseteq K$, the elements of the basis for $I$ are induced subgraphs of the elements of the basis for $K$.
\end{theorem}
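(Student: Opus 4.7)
The plan is to prove each of the listed properties in parallel with its $\Fisg(G)$ counterpart (Lemma \ref{lembasisf}, Propositions \ref{propsubf} and \ref{propunionf}, and Theorem \ref{thmuniquef}), replacing ``subgraph'' by ``induced subgraph'' throughout. The arguments transfer because induced subgraphs of induced subgraphs of $G$ are again induced subgraphs of $G$, isomorphic images of induced subgraphs are induced subgraphs, and the preceding theorem already establishes that $\langle a \rangle$ is a well-defined minimal ideal of $\Iisg(G)$ containing $a$.

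For existence, I would set $\mathcal{C} = \{H \mid \exists\, a \in I,\ a:H \to K\}$. This is a finite family, since $G$ has only finitely many induced subgraphs, and it is a generating set of graphs for $I$ by construction. An induction on $|\mathcal{C}|$ identical to the one in Lemma \ref{lembasisf} then extracts a minimal subfamily $\mathcal{B} \subseteq \mathcal{C}$, which is a basis. Running the same induction on any finite generating set simultaneously shows that every generating set of graphs for $I$ contains a basis.

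For uniqueness up to isomorphism, I would argue by contradiction in the style of Theorem \ref{thmuniquef}. Suppose $\mathcal{B} = \{B_1, \ldots, B_m\}$ and $\mathcal{C} = \{C_1, \ldots, C_n\}$ are bases of $I$, and some $B_m$ is not isomorphic to any $C_j$. Since $B_m$ is the domain of some element of $I$ and $\mathcal{C}$ is generating, $B_m$ is isomorphic to an induced subgraph of some $C_k$. In turn, $C_k$ is the domain of some element of $I$, so $C_k$ is isomorphic to an induced subgraph of some $B_i$. Composing these embeddings yields $B_m$ isomorphic to an induced subgraph of $B_i$ with $i \neq m$, contradicting the minimality of $\mathcal{B}$. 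The union property follows from Lemma \ref{lemideal} together with the fact that every element of $I \cup J$ has its domain coming from either $I$ or $J$, mirroring Proposition \ref{propunionf}; the containment property follows because each basis element of $I$ is the domain of some $a \in I \subseteq K$ and therefore isomorphic to an induced subgraph of some basis element of $K$, mirroring Proposition \ref{propsubf}.

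The main obstacle, though mild, is making sure that ``isomorphic to an induced subgraph of'' composes correctly, so that the chain of embeddings in the uniqueness step really produces an induced subgraph embedding of $B_m$ into $B_i$. This reduces to the observation that graph isomorphisms send induced subgraphs to induced subgraphs, which secures the transitivity needed. With that in hand, the remainder of the proof is a direct transcription of the $\Fisg(G)$ arguments, and no new combinatorial input is required.
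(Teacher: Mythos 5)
Your proposal is correct and follows essentially the same route as the paper, which itself proves this theorem only by pointing to the $\Fisg(G)$ arguments (Lemma \ref{lembasisf}, Propositions \ref{propsubf} and \ref{propunionf}, Theorem \ref{thmuniquef}) and noting that induced subgraphs of induced subgraphs are induced subgraphs. Your explicit attention to the transitivity of ``isomorphic to an induced subgraph of'' is a reasonable extra care but introduces no new idea beyond the paper's intended transcription.
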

Finally, mimicking the arguments for Theorem \ref{thmsemimod} and Proposition \ref{propatomic}, we have that:
\begin{theorem}
Let $G$ be a graph and let $\mathcal{I}$ be the poset of ideals of $\Iisg(G)$ ordered by inclusion.  Then $\mathcal{I}$ is semimodular, but if $G$ contains more than 2 vertices, then $\mathcal{I}$ is not atomic.
\end{theorem}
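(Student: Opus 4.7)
The plan is to mirror the proofs of Theorem \ref{thmsemimod} and Proposition \ref{propatomic} line by line, substituting \emph{induced subgraph} for \emph{subgraph} throughout. Lemma \ref{lemideal} already guarantees that arbitrary unions and intersections of ideals of any semigroup are ideals, so in particular $\mathcal{I}$ is a complete lattice under inclusion with $A \wedge B = A \cap B$ and $A \vee B = A \cup B$, and the usual argument that a lattice of subsets closed under $\cup$ and $\cap$ is distributive applies verbatim.

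For semimodularity I would simply rerun the argument of Theorem \ref{thmsemimod} inside $\Iisg(G)$. Fix $A, B \in \mathcal{I}$ and suppose $A \vee B$ does not cover $B$, so there is some $C \in \mathcal{I}$ with $B \subsetneq C \subsetneq A \cup B$. Choose $\psi \in C \setminus B$; then $\psi \in A$ and $\langle \psi \rangle \neq A$, since otherwise $B \cup \langle \psi \rangle \subseteq C$ would force $A \cup B \subseteq C$. Form the ideal $\langle \psi \rangle \cup (A \cap B)$; it strictly contains $A \cap B$ (because $\psi \notin B$) and is strictly contained in $A$ (because unioning with $B$ yields $\langle \psi \rangle \cup B = C \neq A \cup B$). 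Hence $A$ does not cover $A \cap B$, which is precisely semimodularity.

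For non-atomicity when $|V(G)| > 2$, let $\mathcal{S}$ be the collection of proper induced subgraphs of $G$ and set $X = \bigcup_{H \in \mathcal{S}} \langle id_H \rangle$, an ideal by Lemma \ref{lemideal}. Any element of $X$ has domain isomorphic to an induced subgraph of some proper induced subgraph of $G$, and $G$ is not an induced subgraph of any proper induced subgraph of itself, so $id_G \notin X$. Thus $\Iisg(G) = \langle id_G \rangle$ properly contains $X$ and so is not an atom. Since every ideal containing $id_G$ already contains all of $\Iisg(G)$, the top element $\Iisg(G)$ cannot be written as a union of ideals that are strictly smaller than it, and in particular it is not a join of atoms; therefore $\mathcal{I}$ is not atomic.

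The main obstacle, such as it is, is merely to confirm that the induced-subgraph framework behaves well enough for the $\Fisg$-style manipulations to carry over without change. One needs that induced subgraphs of induced subgraphs of $G$ are themselves induced subgraphs of $G$, that the intersection of two induced subgraphs is an induced subgraph, and that the principal ideals $\langle id_H \rangle$ in $\Iisg(G)$ retain the combinatorial meaning they have in $\Fisg(G)$. All of these are immediate from the definitions, so beyond this bookkeeping the proof is a verbatim translation of the $\Fisg(G)$ arguments.
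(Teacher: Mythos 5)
Your proposal is correct and takes essentially the same route as the paper, which itself only says the result follows by ``mimicking the arguments'' of Theorem~\ref{thmsemimod} and Proposition~\ref{propatomic}; your translation to induced subgraphs, including the check that induced subgraphs of induced subgraphs are induced and that $id_G$ lies in no union of principal ideals of proper induced subgraphs, is exactly what is intended. (You even inherit the paper's harmless imprecision that $\langle\psi\rangle\cup B$ need only be contained in $C$, not equal to it, which still suffices since $C\subsetneq A\cup B$.)
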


The structure of $\Tisg(G,v)$ is nearly identical to the structure of $\Iisg(G)$ except that the domains of these subgraph isomorphisms are induced subgraphs which contain a specific root vertex $v$.  Nevertheless we may use similar arguments to obtain:
\begin{theorem}
Let $G$ be a graph, $v\in G$, $a\in \Tisg(G,v)$, consider the ideal generated by $a$, $\langle a \rangle:=\{\varphi:H\to K\}$ where $H, K$ are connected induced subgraphs of $G$ containing $v$ and isomorphic to a connected induced subgraph the domain of $a$ containing $v$.  Then $\langle a \rangle$ is an ideal of $\Tisg(G,v)$ and is the minimal such ideal containing $a$.
\end{theorem}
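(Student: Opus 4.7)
The plan is to mirror the arguments of Proposition~\ref{propisidealf} and Theorem~\ref{thmminimal}, adapting them to the two complications of $\Tisg(G,v)$: composition is restricted to the $v$-reachable part $C_{\psi\circ\varphi}$, and every isomorphism must fix $v$. As a preliminary remark, the ``connected induced subgraphs of $G$ containing $v$'' appearing in the statement are exactly the $v$-rooted tree induced subgraphs, since any such subgraph admits a spanning tree which we may root at $v$.

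For the ideal property, I would take any $\alpha\in \Tisg(G,v)$ and $a:H\to K$ and apply the argument already made in Proposition~\ref{TisgISG} to conclude that $C_{\alpha\circ a}$ is a $v$-rooted tree induced subgraph of $K$. Since $a^{-1}$ is a root-preserving isomorphism onto $H$, the domain $a^{-1}(C_{\alpha\circ a})$ of $\alpha\circ a$ is isomorphic, via a root-preserving map, to a $v$-rooted tree induced subgraph of $H$, placing $\alpha\circ a\in \langle a\rangle$. The symmetric computation handles $a\circ \alpha$, so $\langle a\rangle$ is closed under left and right multiplication by elements of $\Tisg(G,v)$.

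For minimality, let $I$ be any ideal of $\Tisg(G,v)$ containing $a$, and let $\varphi:L\to M$ lie in $\langle a\rangle$ with $L$ and $M$ rooted-isomorphic to a $v$-rooted tree induced subgraph $L'$ of $H$. The central computation is that $a\circ id_{L'}=a|_{L'}$: since $L'$ is a $v$-rooted tree induced subgraph of $H$, every $u\in V(L')$ admits a $v,u$-path inside $L'=\Img(id_{L'})$ which, being a subgraph of $H=\Dom(a)$, is also a $v,u$-path inside $H$, and thus $C_{a\circ id_{L'}}=L'$. Hence $a|_{L'}\in I$, and composing on either side with root-preserving isomorphisms $L\to L'$ and $L'\to M$ (which exist by the rooted-isomorphism hypothesis) yields $\varphi\in I$ after inserting $(a|_{L'})^{-1}\circ(a|_{L'})$, exactly as in Theorem~\ref{thmminimal}. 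The main obstacle will be verifying in each such composition that the $C_{-}$ restrictions degenerate to the expected subgraph; this reduces to checking that the intermediate objects remain $v$-rooted tree induced subgraphs linked by root-preserving isomorphisms, so that no path-through-$v$ requirement ever cuts off part of the domain. Once this bookkeeping is in hand the structural argument is a direct transcription of the $\Fisg(G)$ proofs.
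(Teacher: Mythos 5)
Your proposal is correct and follows the same route the paper intends: the paper offers no separate proof for this theorem, only the instruction to mimic Proposition~\ref{propisidealf} and Theorem~\ref{thmminimal}, which is precisely what you do. You also supply the one genuinely new verification needed for $\Tisg(G,v)$ --- that $C_{a\circ id_{L'}}=L'$ and, more generally, that the $C_{-}$ restrictions never truncate the domains because all intermediate graphs are connected induced subgraphs containing $v$ linked by root-preserving isomorphisms --- which is exactly the right point to check.
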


\begin{theorem}
Let $G$ be a graph, $v\in G$ and $I$ an ideal of $\Tisg(G,v)$, and then let $\mathcal{B}$ be a collection of connected induced subgraphs of $G$ containing $v$ so that given any $a\in I$, the domain of $a$ is  isomorphic to a connected induced subgraph of some $B_i\in\mathcal{B}$ containing $v$, and for each $B_i\in \mathcal{B}$, there is an $a\in I$ where the domain of $a$ is isomorphic to $B_i$.  We call $\mathcal{B}$ a generating set of graphs for $I$, and if $\mathcal{B}$ is minimal, then $\mathcal{B}$ is called a basis of $I$.

Then we have that each ideal $I$ of $\Tisg(G,v)$ has a basis, that basis is unique up to isomorphism, each generating set of graphs for an ideal $I$ contains a basis, the union of ideals $I, J$ has a generating set which is the union of the basis, and given any ideals $I, K$ where $I\subseteq K$, the elements of the basis for $I$ are connected induced subgraphs of the elements of the basis for $K$ and contain $v$.
\end{theorem}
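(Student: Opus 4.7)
The strategy is to transfer the $\Fisg(G)$ development wholesale to $\Tisg(G,v)$ by making a single structural observation and then repeating the proofs of Lemma \ref{lembasisf}, Propositions \ref{propsubf} and \ref{propunionf}, and Theorem \ref{thmuniquef} essentially verbatim, with ``subgraph'' replaced by ``connected induced subgraph containing $v$.'' The structural observation I would establish up front is that the $v$-rooted tree induced subgraphs of $G$ are precisely the connected induced subgraphs of $G$ containing $v$: one direction is the definition, and the converse follows because every connected graph has a spanning tree, which we may root at $v$, so if $H$ is connected and induced with $v \in V(H)$ then $H = G[V(T)]$ for any $v$-rooted spanning tree $T$ of $H$.

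With this identification in hand, the existence of a basis for any ideal $I \subseteq \Tisg(G,v)$ follows by taking $\mathcal{C} := \{H : \exists a \in I,\ a : H \to K\}$, which is finite because $G$ is finite, and then inductively removing redundant elements exactly as in Lemma \ref{lembasisf}. The same induction yields that every finite generating set of graphs contains a basis. For uniqueness I would argue by contradiction as in Theorem \ref{thmuniquef}: if $\mathcal{B}$ and $\mathcal{C}$ are both bases of $I$ and some $B_m \in \mathcal{B}$ is isomorphic to no element of $\mathcal{C}$, then because $B_m$ is the domain of an element of $I$ we obtain some $C_k \in \mathcal{C}$ with $B_m$ isomorphic to a $v$-rooted connected induced subgraph of $C_k$, and then some $B_i \in \mathcal{B}$ with $C_k$ isomorphic to a $v$-rooted connected induced subgraph of $B_i$, contradicting the minimality of $\mathcal{B}$.

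The union statement is immediate after mimicking Proposition \ref{propunionf}: if $a \in I \cup J$ then $a$ lies in at least one of the two ideals, so its domain embeds as a $v$-rooted connected induced subgraph into an element of the corresponding basis, hence of $\mathcal{B} \cup \mathcal{C}$. For the containment statement $I \subseteq K$, I would take a basis element $C_i$ of $I$, pick $a : C_i \to L$ in $I \subseteq K$, and apply the defining property of a generating set for $K$ to locate a basis element of $K$ of which $C_i$ is isomorphic to a $v$-rooted connected induced subgraph, reproducing Proposition \ref{propsubf}.

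The single place where the analogy with $\Fisg$ could a priori fail is the use of identity idempotents to restrict the domain of an element of $I$ to an arbitrary $v$-rooted connected induced subgraph of its domain. In $\Fisg(G)$ one simply writes $a \circ id_{H'}$ and the definition of composition returns $a|_{H'}$. In $\Tisg(G,v)$ the composition passes through the maximal $v$-rooted tree induced subgraph $C_{\psi \circ \varphi}$ contained in the intersection of image and domain, so I would verify that when $H'$ is itself a $v$-rooted connected induced subgraph of $\Dom(a)$, this maximal subgraph is $H'$ itself and $a \circ id_{H'} = a|_{H'} \in \Tisg(G,v)$ with domain $H'$. Once this verification is made, the ideal $\langle a \rangle$ in $\Tisg(G,v)$ contains a morphism with domain $H'$ for every $v$-rooted connected induced subgraph $H' \subseteq \Dom(a)$, and all of the $\Fisg$ arguments lift directly to give the five claims of the theorem.
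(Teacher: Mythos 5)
Your proposal is correct and follows essentially the same route as the paper, which itself only gestures at ``similar arguments'' to the $\Fisg(G)$ development (Lemma \ref{lembasisf}, Propositions \ref{propsubf} and \ref{propunionf}, Theorem \ref{thmuniquef}). In fact you are more careful than the paper: identifying the $v$-rooted tree induced subgraphs with the connected induced subgraphs containing $v$, and verifying that $a\circ id_{H'}=a|_{H'}$ under the $\Tisg(G,v)$ composition rule, are exactly the two points on which the transfer hinges, and the paper leaves both implicit.
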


\begin{theorem}
Let $G$ be a graph and let $\mathcal{I}$ be the poset of ideals of $\Tisg(G,v)$ ordered by inclusion.  Then $\mathcal{I}$ is semimodular, but if $G$ is connected and contains more than 2 vertices, then $\mathcal{I}$ is not atomic.
\end{theorem}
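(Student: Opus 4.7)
The plan is to mirror the proofs of Theorem \ref{thmsemimod} and Proposition \ref{propatomic}, making the adaptations needed so that all generating subgraphs throughout are connected induced subgraphs of $G$ containing $v$.

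For the lattice and semimodularity part, Lemma \ref{lemideal} gives closure of $\mathcal{I}$ under arbitrary union and intersection, so $\mathcal{I}$ is a distributive lattice with $A\wedge B=A\cap B$ and $A\vee B=A\cup B$. To establish semimodularity, I would suppose $A,B\in\mathcal{I}$ and $A\vee B$ fails to cover $B$, pick an ideal $C$ with $B\subsetneq C\subsetneq A\cup B$, and choose $\psi\in C\setminus B$; then $\psi\in A$, and $\langle\psi\rangle\ne A$ (otherwise $C\supseteq\langle\psi\rangle\cup B=A\cup B$, contradicting $C\subsetneq A\cup B$). The ideal $\langle\psi\rangle\cup(A\cap B)$ then sits strictly between $A\cap B$ and $A$: strictly above $A\cap B$ because $\psi\notin B$, and strictly below $A$ because $(\langle\psi\rangle\cup(A\cap B))\cup B=\langle\psi\rangle\cup B\subseteq C\subsetneq A\cup B$. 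Hence $A$ does not cover $A\cap B$, giving semimodularity.

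For non-atomicity, assume $G$ is connected with more than two vertices, let $S$ be the collection of proper $v$-rooted tree induced subgraphs of $G$, and set $X=\bigcup_{H\in S}\langle id_H\rangle$, an ideal by Lemma \ref{lemideal}. Because $G$ is connected, it has a spanning tree rooted at $v$, so $G$ itself is a $v$-rooted tree induced subgraph and $\langle id_G\rangle=\Tisg(G,v)$ is the top of $\mathcal{I}$. The key observation is that $id_G\notin X$, since $G$ cannot be isomorphic to any induced subgraph of a proper $v$-rooted tree induced subgraph of itself on cardinality grounds. Since $|V(G)|>2$, the ideal $X$ is a non-trivial proper sub-ideal of $\langle id_G\rangle$, showing $\langle id_G\rangle$ is not an atom. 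Moreover, any proper sub-ideal $I\subsetneq\langle id_G\rangle$ omits $id_G$, so by the basis theorem for $\Tisg(G,v)$ its basis consists of proper $v$-rooted tree induced subgraphs, forcing $I\subseteq X$. Therefore $\langle id_G\rangle$ cannot be realized as the join of any family of proper sub-ideals, and in particular cannot be a join of atoms, so $\mathcal{I}$ is not atomic.

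The main obstacle will be confirming that the basis machinery developed for $\Fisg(G)$ and $\Iisg(G)$ transports cleanly to $\Tisg(G,v)$ under the constraint that every basis element must be a connected induced subgraph containing $v$: in particular, that principal ideals in $\Tisg(G,v)$ are still uniquely determined up to isomorphism by their generator's domain, and that proper sub-ideals of $\langle id_G\rangle$ still have bases consisting of proper $v$-rooted tree induced subgraphs of $G$. Given the analogue statements stated just above this theorem, both parts reduce to the direct translations above.
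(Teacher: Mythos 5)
Your proposal is correct and takes essentially the same approach as the paper, which itself only asserts that the arguments of Theorem~\ref{thmsemimod} and Proposition~\ref{propatomic} carry over to $\Tisg(G,v)$; your write-up supplies the details of that transfer accurately (the semimodularity argument is generic in Lemma~\ref{lemideal} and principal ideals, and connectivity of $G$ correctly makes $\langle id_G\rangle=\Tisg(G,v)$ the top element with every proper sub-ideal contained in $X$).
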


The structure of $\Pisg(G,v)$ is further complicated by the ordered graph, path structure.  However, analogous definitions and results still hold:
\begin{theorem}
Let $G$ be a graph, $v\in G$, $a\in \Pisg(G,v)$, consider the ideal generated by $a:(L, P_L)\to (M, P_M)$, $\langle a \rangle:=\{\varphi:(H, P_H)\to (K, P_K)\}$ where $P_H, P_K$ are paths rooted at $v$ and isomorphic to a subpath of $P_L$, $P_L'$, rooted at $v$, and $H, K$ are isomorphic to $G[V(P_L')]$.  Then $\langle a \rangle$ is an ideal of $\Pisg(G,v)$ and is the minimal such ideal containing $a$.
\end{theorem}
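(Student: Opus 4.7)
The plan is to adapt the proofs of Proposition~\ref{propisidealf} and Theorem~\ref{thmminimal} to the rooted path-pair setting, taking care that the composition rule in $\Pisg(G,v)$ respects the description of $\langle a\rangle$. There are two obligations: to show that $\langle a\rangle$ is an ideal, and to show that any ideal of $\Pisg(G,v)$ containing $a$ contains all of $\langle a\rangle$.

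For the ideal property, I would pick $b:(H,P_H)\to(K,P_K)$ in $\langle a\rangle$ and $\alpha:(U,P_U)\to(W,P_W)$ in $\Pisg(G,v)$, and examine $\alpha\circ b$ and $b\circ\alpha$. By the composition rule in $\Pisg(G,v)$, $\alpha\circ b$ has distinguished path $b^{-1}(Q)$, where $Q$ is the longest $v$-rooted subpath common to $P_K$ and $P_U$, and its underlying subgraph is $G[V(b^{-1}(Q))]$. Since $P_K$ is already isomorphic to some $v$-rooted subpath $P_L'$ of $P_L$, the path $Q$ is in particular a $v$-rooted subpath of $P_K$, so $b^{-1}(Q)$ is a $v$-rooted subpath of $P_H$ and hence isomorphic to a $v$-rooted subpath of $P_L$. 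The subgraph condition follows since the induced-subgraph operation is applied consistently on both sides, placing $\alpha\circ b$ into $\langle a\rangle$. The check for $b\circ\alpha$ is symmetric, using that $b$ carries $v$-rooted subpaths of its domain path to $v$-rooted subpaths of its image path.

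For minimality, let $I$ be any ideal containing $a$ and let $\varphi:(H,P_H)\to(K,P_K)$ be an arbitrary element of $\langle a\rangle$, with $P_H,P_K$ isomorphic to a common $v$-rooted subpath $P_L'$ of $P_L$ and $H,K\cong G[V(P_L')]$. First I would extract the restriction of $a$ at the subpath $P_L'$: the idempotent $e=\mathrm{id}_{(G[V(P_L')],P_L')}$ lies in $\Pisg(G,v)$, so $a\circ e\in I$ and has domain pair $(G[V(P_L')],P_L')$. Next, I would pick isomorphisms $\psi,\rho\in\Pisg(G,v)$ with $\psi:(H,P_H)\to(G[V(P_L')],P_L')$ and $\rho$ carrying the image pair of $a\circ e$ to $(K,P_K)$; these exist because all the relevant rooted path pairs are abstractly isomorphic as $v$-rooted path pairs inside $G$. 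Then $f:=\rho\circ(a\circ e)\circ\psi\in I$ is an isomorphism $(H,P_H)\to(K,P_K)$, and the standard inverse-semigroup trick $\varphi=\varphi\circ f^{-1}\circ f\in\Pisg(G,v)\cdot I\subseteq I$ places $\varphi$ in $I$.

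The main obstacle is the composition-rule bookkeeping: because $\psi\circ\varphi$ in $\Pisg(G,v)$ is restricted along the \emph{longest} common $v$-rooted subpath of the image and domain paths, one must verify at each step that the subpath appearing after composition really is this longest common subpath, not merely a subpath of it. This reduces to the observation that when one of the two paths is already a $v$-rooted subpath of the other, the longest common $v$-rooted subpath is the shorter of the two; once this is used to justify each composition in the constructions above, the argument becomes a direct translation of the $\Fisg(G)$ proofs.
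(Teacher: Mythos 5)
Your proposal is correct and follows essentially the route the paper intends: the paper gives no separate proof for $\Pisg(G,v)$, stating only that the arguments of Proposition~\ref{propisidealf} and Theorem~\ref{thmminimal} carry over, and your write-up is a faithful instantiation of exactly that analogy. You also correctly isolate the one genuinely new point in the rooted-path setting --- that compositions do not truncate because the longest common $v$-rooted subpath of two nested $v$-rooted paths is the shorter one --- which is what makes the $\varphi=\varphi\circ f^{-1}\circ f$ trick go through.
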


\begin{theorem}
Let $G$ be a graph, $v\in G$ and $I$ an ideal of $\Pisg(G,v)$, and then let $\mathcal{B}$ be a collection of pairs $(B_i, P_{B_i})$ where $P_{B_i}$ is a path rooted at $v$, $B_i=G[V(P_{B_i})]$ and given any $a\in I$, the domain of $a$ with domain $(H, P_H)$, we have that  $P_H$ is  isomorphic to a $v$ rooted subpath of some $P_{B_i}$ and $H$ is isomorphic to $G[V(P_{B_i})]$, and for each $(B_i, P_{B_i})\in \mathcal{B}$, there is an $a\in I$ where the domain of $a$ is isomorphic to $(B_i, P_{B_i})$.  We call $\mathcal{B}$ a generating set of graphs for $I$, and if $\mathcal{B}$ is minimal, then $\mathcal{B}$ is called a basis of $I$.

Then we have that each ideal $I$ of $\Pisg(G,v)$ has a basis, that basis is unique up to isomorphism, each generating set of graphs for an ideal $I$ contains a basis, the union of ideals $I, J$ has a generating set which is the union of the basis, and given any ideals $I, K$ where $I\subseteq K$, the elements of the basis for $I$ are $v$ rooted path induced subgraphs of the elements of the basis for $K$.
\end{theorem}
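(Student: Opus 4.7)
The plan is to follow the template established in the analogous theorems for $\Fisg(G)$, $\Iisg(G)$, and $\Tisg(G,v)$, adapting each intermediate lemma and proposition to the setting of $v$-rooted path pairs. The key observation is that the parameterizing objects are now pairs $(H, P_H)$ with $H = G[V(P_H)]$, and the relevant partial order is $(H', P_{H'}) \leq (H, P_H)$ iff $P_{H'}$ is a $v$-rooted subpath (initial segment) of $P_H$; note that $H'$ is then automatically determined as $G[V(P_{H'})]$. Because the underlying graph structure is rigidly tied to the path, the combinatorics of bases reduces to the combinatorics of $v$-rooted paths in $G$.

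First I would establish an analog of Lemma \ref{lembasisf}: given any finite generating set of path pairs $\mathcal{C}$ for $I$, some $\mathcal{B} \subseteq \mathcal{C}$ is a basis. The induction on $|\mathcal{C}|$ transcribes verbatim, with minimality now measured with respect to the path-pair order above. Existence of a basis then follows since $G$ is finite, so $\mathcal{C}_0 := \{(H,P_H) : \exists a \in I \text{ with domain } (H,P_H)\}$ is finite and a generating set by construction. For uniqueness up to isomorphism, I would mimic Theorem \ref{thmuniquef}: given two bases $\mathcal{B}$ and $\mathcal{C}$, if $(B_m, P_{B_m}) \in \mathcal{B}$ is not isomorphic to any element of $\mathcal{C}$, it is a proper subpath pair of some $(C_k, P_{C_k})$, which in turn must be a subpath pair of some $(B_i, P_{B_i}) \in \mathcal{B}$, contradicting minimality.

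The remaining two claims are essentially bookkeeping. For the union of ideals, any element of $I\cup J$ lies in $I$ or $J$, so its domain pair is a subpath pair of some element of $\mathcal{B}\cup\mathcal{C}$, giving the analog of Proposition \ref{propunionf}. For the subset property, given $I\subseteq K$ with bases $\mathcal{C}$ and $\mathcal{B}$ respectively, each $C_i$ is the domain of some $a\in I\subseteq K$, hence a $v$-rooted subpath pair of some $(B_j, P_{B_j}) \in \mathcal{B}$, yielding the analog of Proposition \ref{propsubf}.

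The main obstacle is the underlying fact that an ideal of $\Pisg(G,v)$ is genuinely determined by the set of domain path pairs of its elements, which is the content of the analog of Proposition \ref{propisidealf} and Theorem \ref{thmminimal}. Specifically, one must verify that if $a \in I$ has domain $(H,P_H)$ and $(H',P_{H'})$ is a $v$-rooted subpath pair of $(H,P_H)$, then every isomorphism $\varphi$ between path pairs isomorphic to $(H',P_{H'})$ lies in $I$. This is subtle because composition in $\Pisg(G,v)$ truncates to the longest common $v$-rooted subpath of the two images, so one must carefully pre- and post-compose $a$ with the identities $id_{(H',P_{H'})}$ and with isomorphisms of path pairs to extract $\varphi$ as $\varphi \circ (a\circ id_{(H',P_{H'})})^{-1} \circ (a\circ id_{(H',P_{H'})})$, verifying at each step that the relevant longest common subpath is exactly $P_{H'}$ so that no unwanted truncation occurs. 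Once this is in hand, the basis uniquely recovers the ideal and all remaining statements follow as outlined.
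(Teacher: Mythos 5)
Your proposal is correct and follows exactly the route the paper intends: the paper itself offers no separate proof for the $\Pisg(G,v)$ case, merely asserting that the arguments of Lemma \ref{lembasisf}, Propositions \ref{propsubf}, \ref{propunionf}, and Theorems \ref{thmminimal}, \ref{thmuniquef} carry over to $v$-rooted path pairs, which is precisely the adaptation you carry out. If anything, you supply more detail than the paper does by isolating the one genuinely new point --- that composition truncates to the longest common $v$-rooted subpath, so one must check no unwanted truncation occurs when extracting $\varphi$ from $a$ --- which is the right thing to verify.
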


\begin{theorem}
Let $G$ be a graph and let $\mathcal{I}$ be the poset of ideals of $\Pisg(G,v)$ ordered by inclusion.  Then $\mathcal{I}$ is semimodular, but if $G$ is connected and contains more than 2 vertices, then $\mathcal{I}$ is not atomic.
\end{theorem}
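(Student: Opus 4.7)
The plan is to follow the two-part template set by Theorem \ref{thmsemimod} and Proposition \ref{propatomic} for $\Fisg(G)$, using the ideal-and-basis theory for $\Pisg(G,v)$ already developed in the preceding theorem. Throughout, $\mu_0$ denotes the empty map and $(\{v\}, v)$ denotes the trivial $v$-rooted path pair consisting of just the vertex $v$.

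For semimodularity, the quickest route is to observe that by Lemma \ref{lemideal} the poset $\mathcal{I}$ is closed under arbitrary unions and intersections, so $\mathcal{I}$ embeds as a sublattice of the power set of $\Pisg(G,v)$ and is therefore distributive; distributivity implies semimodularity. If one prefers a direct argument mirroring Theorem \ref{thmsemimod}, I would take $A, B \in \mathcal{I}$ for which $A \cup B$ does not cover $B$, choose $C$ strictly between them, select $\psi \in C \setminus B$ (whence $\psi \in A$ and $\langle \psi \rangle \subsetneq A$), and verify that $\langle \psi \rangle \cup (A \cap B)$ is an ideal strictly between $A \cap B$ and $A$; the strict upper containment follows from $\langle \psi \rangle \cup B \subseteq C \subsetneq A \cup B$.

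For non-atomicity, I would first show that $\mathcal{I}$ has a unique atom, namely $\langle id_{(\{v\},v)} \rangle = \{\mu_0, id_{(\{v\},v)}\}$. Indeed, any ideal $J$ strictly larger than $\{\mu_0\}$ contains some $\varphi \neq \mu_0$; then $\varphi^{-1} \circ \varphi = id_{\Dom(\varphi)} \in J$, and the basis theorem for $\Pisg(G,v)$ identifies $(\{v\}, v)$ as a $v$-rooted sub-path-pair of $\Dom(\varphi)$, forcing $id_{(\{v\},v)} \in \langle id_{\Dom(\varphi)} \rangle \subseteq J$. Hence the only ideal covering the minimum is $\langle id_{(\{v\},v)} \rangle$. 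When $G$ is connected with $|V(G)| > 2$, $v$ has a neighbour $u$; let $P$ be the length-one $v$-rooted path from $v$ to $u$ and set $H = G[\{v, u\}]$. Then $\langle id_{(H,P)} \rangle$ strictly contains the unique atom but is not itself an atom (it properly contains $id_{(H,P)}$), so it cannot be expressed as a join of atoms, since every such join equals either $\{\mu_0\}$ or the unique atom. This shows $\mathcal{I}$ is not atomic.

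The main obstacle is the uniqueness-of-atom step: it rests on the basis characterisation of principal ideals of $\Pisg(G,v)$, which must correctly identify $(\{v\}, v)$ as a $v$-rooted sub-path-pair of every non-trivial domain and propagate the corresponding identity into every non-minimum ideal. The semimodular bookkeeping, and the verification that $\langle id_{(H,P)} \rangle$ is a strict extension of the unique atom, are routine once distributivity and the basis theory are in hand.
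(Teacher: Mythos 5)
Your proof is correct, and the semimodularity half is essentially the paper's argument (the paper proves Theorem \ref{thmsemimod} for $\Fisg(G)$ and simply asserts that the $\Pisg(G,v)$ case is obtained by ``mimicking'' it; your appeal to closure under unions and intersections via Lemma \ref{lemideal}, hence distributivity, hence semimodularity, together with the optional direct covering argument, is exactly that). The non-atomicity half, however, takes a genuinely different route. The paper's template (Proposition \ref{propatomic}) exhibits a join-irreducible non-atom at the \emph{top}: it shows $\langle id_G\rangle=\Fisg(G)$ strictly contains the union of all principal ideals of proper subgraphs and hence is not a join of other ideals. You instead work at the \emph{bottom}: you show every nonzero ideal contains $id_{(G[\{v\}],v)}$ (via $\varphi^{-1}\circ\varphi=id_{\Dom(\varphi)}$ and the description of principal ideals), so $\mathcal{I}$ has a unique atom, and then any ideal strictly above that atom --- such as $\langle id_{(H,P)}\rangle$ for a length-one $v$-rooted path $P$ --- cannot be a join of atoms. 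Your version is arguably better suited to $\Pisg(G,v)$: when $G$ has several pairwise non-isomorphic maximal $v$-rooted path pairs, the top ideal \emph{is} the union of the proper principal ideals they generate, so a literal transcription of the paper's top-down argument would have to be redirected to one of those principal ideals, whereas a unique minimal nonzero ideal always exists. Two harmless caveats: if $v$ carries multiple loops then $\langle id_{(\{v\},v)}\rangle$ contains the (possibly nontrivial) automorphisms of $G[\{v\}]$ fixing the trivial path, so it need not equal $\{\mu_0, id_{(\{v\},v)}\}$ as a set --- but it is still contained in every nonzero ideal, which is all your argument uses; and whether one admits $\emptyset$ or $\{\mu_0\}$ as the bottom of $\mathcal{I}$ only relabels which ideal is the unique atom without affecting the conclusion.
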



\section{Graph Characterization by Inverse Semigroups}\label{Characterization}

We now consider the question of characterization. We show that $\Fisg(G)$ characterizes $G$ and conversely. We then show this characterization will not hold in $\Iisg(G)$ and give an infinite class of counterexamples.

\begin{theorem}\label{FISGRep}
Let $G$ and $H$ be graphs, then $\Fisg(G)\cong \Fisg(H)$ if and only if $G \cong H$.
\end{theorem}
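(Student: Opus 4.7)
The plan is to handle the two directions of the equivalence separately. The forward direction is a standard conjugation argument; the reverse direction extracts the graph $G$ from the idempotent semilattice of $\Fisg(G)$.

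For $(\Leftarrow)$, given a graph isomorphism $\theta\colon G\to H$, I would define $\Theta\colon\Fisg(G)\to\Fisg(H)$ by conjugation: for $\varphi\colon K\to L$ in $\Fisg(G)$, set $\Theta(\varphi)(x)=\theta(\varphi(\theta^{-1}(x)))$ on the subgraph $\theta(K)$ of $H$. A direct check shows that $\Theta$ respects the composition defined at the start of Section \ref{ISGs} and is inverted by conjugation with $\theta^{-1}$, so it is an inverse semigroup isomorphism.

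For $(\Rightarrow)$, suppose $\Theta\colon\Fisg(G)\to\Fisg(H)$ is an inverse semigroup isomorphism. Since $\Theta$ carries idempotents to idempotents and respects the natural partial order on idempotents (because $e\leq f$ iff $ef=e$), it restricts to a lattice isomorphism $\Phi$ between the subgraph lattices of $G$ and $H$, where the identification of these idempotent semilattices with subgraph lattices is by Lemma \ref{idemps} and Theorem \ref{FisgLattice}. The task reduces to reconstructing the multigraph $G$ from its subgraph lattice in a lattice-invariant way.

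The atoms of the subgraph lattice are exactly the single-vertex subgraphs, so $\Phi$ restricts to a bijection $\sigma\colon V(G)\to V(H)$. I would next identify loops and edges by the following characterisations. The subgraph consisting of a vertex $v$ with one loop attached covers only the atom $\{v\}$ in the subgraph lattice, while a two-atom join $\{v\}\vee\{w\}$ has two covers below it; hence loops at $v$ are exactly the elements that cover the atom $\{v\}$ and cover nothing else. Similarly, the subgraph $\{u_1,u_2,e\}$ consisting of an edge $e$ together with its two distinct endpoints covers only the two-vertex edgeless subgraph $\{u_1\}\vee\{u_2\}$, whereas adjoining a loop at $u_1$ or a third isolated vertex to $\{u_1,u_2\}$ produces a subgraph with an additional cover; hence edges between $u_1$ and $u_2$ are exactly the elements that cover $\{u_1\}\vee\{u_2\}$ and cover nothing else. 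These characterisations are purely order-theoretic and hence preserved by $\Phi$, so the counts of loops at each vertex and of parallel edges between each pair of vertices match under $\sigma$, which therefore extends to a graph isomorphism $G\to H$.

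The main obstacle I anticipate is precisely this last lattice-theoretic analysis, which is delicate because loops and multi-edges are permitted: a subgraph of the form $\{u_1,u_2,\ell_{u_1}\}$ with a loop at $u_1$ has the same atoms below it as a genuine $u_1u_2$-edge subgraph $\{u_1,u_2,e\}$, so naive invariants such as atom support or rank overcount edges. Only the finer condition ``covers no other element of the lattice'' cleanly separates the two, and once this refinement is in place the remainder of the argument is routine bookkeeping on matching up corresponding loops and parallel edges.
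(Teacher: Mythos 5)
Your proposal is correct and follows essentially the same route as the paper: conjugation by the graph isomorphism for one direction, and for the other, recovering vertices as atoms of the idempotent semilattice, loops as covers of a single atom, and edges as covers of two-atom joins. Your refinement that an edge subgraph is distinguished by covering $\{u_1\}\vee\{u_2\}$ \emph{and nothing else} is in fact slightly more careful than the paper's phrasing, which only excludes covering ``other single vertex isomorphisms'' and thus does not explicitly rule out the subgraph $\{u_1,u_2,\ell_{u_1}\}$.
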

\begin{proof}
$(\Rightarrow)$ Let $\Phi:\Fisg(G)\to  \Fisg(H)$ be a semigroup isomorphism. Let $v\in V(G)$, and $id_{v}:\{v\}\to \{v\}$ be the identity subgraph isomorphism. As $id_{v}$ is an idempotent $\Phi(id_{v})$ is an idempotent. By Lemma \ref{idemps}, $\Phi(id_{v})$ is a subgraph automorphism of $H$. Furthermore as the lattice structure of idempotents is preserved by the semigroup isomorphism, $\Phi(id_{v})$ corresponds to a single vertex subgraph of $H$ as $\Phi(id_v)$ covers the empty map. Thus we define $\phi:V(G)\to V(H)$ by $\phi(v)=\Phi(id_{v})(v)$. For $u\in V(H)$, we similarly define $\phi^{-1}:V(H)\to V(G)$ by $\phi^{-1}(u)=\Phi^{-1}(id_u)(u)$ and notice that $\phi^{-1}\circ\phi=id_{V(G)}$ and $\phi\circ\phi^{-1}=id_{V(H)}$. Hence $\phi$ is a bijection.\par 
Now let $e\in E(G)$ be a loop. Let $E$ be the subgraph of $e$ and its incident vertex, $v$. Then $\Phi(id_E)$ corresponds to an idempotent that only covers a single vertex, namely $\Phi(id_{v})$ as $\Phi(id_E \circ id_v)=\Phi(id_E)\circ\Phi(id_v)=\Phi(id_v)$. Thus $\Phi(id_E)$ is the identity of a subgraph of $H$ consisting of an edge and its incident vertex $\Phi(id_v)(v)$. Thus $\Phi(id_E)(e)$ is a loop in $E(H)$.\par
If $e\in E(G)$ is a non-loop edge and $E$ is the subgraph of $e$ and its two incident vertices $u$ and $v$, then $\Phi(id_E)$ covers the join of $\Phi(id_u)$ and $\Phi(id_v)$, and it does not cover any other single vertex isomorphisms. Hence $\Phi(id_E)$ is a subgraph consisting of an edge and two incident vertices, namely $\Phi(id_u)(u)$ and $\Phi(id_v)(v)$. Then $\Phi(id_E)(e)$ is an edge of $H$. Thus for $e\in E(G)$ we define $\theta:E(G)\to E(H)$ by $\theta(e)=\Phi(id_E)(e)$ for $E$ the subgraph consisting of $e$ and its incident vertices (or vertex if it is a loop). For $f\in E(H)$, we similarly define $\theta^{-1}:E(H)\to  E(G)$ by $\theta^{-1}(f)=\Phi^{-1}(id_F)(f)$ for $F$ the subgraph of $H$ consisting of $f$ and its incident vertices (or vertex if it is a loop). Then $\theta^{-1}\circ\theta=id_{E(G)}$ and $\theta\circ\theta^{-1}=id_{E(H)}$, and $\theta$ is a bijection.\par
Let $e\in E(G)$ incident to $u,v \in V(G)$. Then as $\Phi(id_E)(E)$ is a subgraph of $H$ with vertices $\Phi(id_u)(u)$ and $\Phi(id_v)(v)$, $\theta(e)=\Phi(id_E)(e)$ is incident to $\phi(u)=\Phi(id_u)(u)$ and $\phi(v)=\Phi(id_v)(v)$. Thus $\theta$ preserves incidence. Hence $\varphi=(\phi,\theta)$ is a graph isomorphism and $G\cong H$.\par
$(\Leftarrow)$ Let $\varphi:G \to H$ be an isomorphism. Then for $f:G_1\to G_2$ in $\Fisg(G)$ define $\Phi:\Fisg(G)\to Fisg(H)$ by $\Phi(f)=\varphi f \varphi^{-1}:\varphi(G_1)\to \varphi(G_2)$. As $\varphi$ is an isomorphism, $\varphi(G_1)\cong \varphi(G_2)$ in $H$ and $\Phi(f)\in \Fisg(H)$.\par
Now suppose $f,g\in \Fisg(G)$ with $f:G_1\to G_2$ and $g:G_3\to G_4$. Then $G_2 \cap G_3$ is a subgraph of $G$ and $g\circ f: f^{-1}(G_2\cap G_3)\to g(G_2\cap G_3)$ is an subgraph isomorphism. Hence $\Phi(g\circ f)=\varphi\circ(g\circ f) \circ \varphi^{-1}:\varphi(f^{-1}(G_2 \cap G_3))\to \varphi(g(G_2\cap G_3))$ and is an element of $\Fisg(G)$. Then, $\Phi(g\circ f)=\varphi\circ(g\circ f)\circ\varphi^{-1}=(\varphi\circ g \circ \varphi^{-1}) \circ (\varphi \circ f \circ \varphi^{-1})=\Phi(g)\circ\Phi(f)$. Hence $\Phi$ is a semigroup homomorphism.\par
Now for $j:H_1\to H_2$ in $\Fisg(H)$, we define $\Phi^{-1}:\Fisg(H)\to \Fisg(G)$ by $\Phi^{-1}(j)=\varphi^{-1}\circ j \circ \varphi:\varphi^{-1}(H_1)\to\varphi^{-1}(H_2)$. Similarly $\Phi^{-1}$ is a semigroup homomorphism and $\Phi^{-1}\circ\Phi=id_{\Fisg(G)}$ and $\Phi\circ\Phi^{-1}=id_{\Fisg(H)}$. Thus $\Fisg(G)\cong \Fisg(H)$.
\end{proof}

For $G$ the graph of a vertex with a loop and $K_1$ the single vertex graph, we have that $\Iisg(G)\cong \Iisg(K_1)$. Furthermore for $K_n$ and $\overline{K_n}$ the empty edge graph on $n$ vertices $\Iisg(K_n)\cong \Iisg(\overline{K_n})$. As isomorphisms preserve both adjacency and non-adjacency in simple graphs, an isomorphism of vertex induced subgraphs of a simple graph induces an isomorphism of the subgraphs of its complement on the same vertex sets. Thus they will have isomorphic induced subgraph inverse semigroups. We formally state this below.
\begin{proposition}
Let $G$ be a simple graph and $\overline{G}$ be its complement graph, then $\Iisg(G)\cong \Iisg(\overline{G})$.
\end{proposition}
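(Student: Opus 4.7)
The plan is to construct an explicit semigroup isomorphism $\Phi \colon \Iisg(G) \to \Iisg(\overline{G})$ by sending each partial isomorphism to the same underlying vertex bijection, now viewed as an isomorphism of vertex induced subgraphs of $\overline{G}$. The core observation is that since $G$ is simple, a graph isomorphism $\varphi \colon G[X] \to G[Y]$ is determined by a vertex bijection $\varphi \colon X \to Y$ that preserves both adjacency and non-adjacency in $G$, and therefore preserves adjacency in $\overline{G}$ as well. Thus the same bijection defines an element $\overline{\varphi} \colon \overline{G}[X] \to \overline{G}[Y]$ of $\Iisg(\overline{G})$, and we set $\Phi(\varphi) = \overline{\varphi}$.

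First I would verify that $\Phi$ is well defined: given $u, v \in X$ with $u \neq v$, we have $uv \in E(G)$ iff $\varphi(u)\varphi(v) \in E(G)$, and since $G$ is simple the complement edge sets satisfy $uv \in E(\overline{G})$ iff $\varphi(u)\varphi(v) \in E(\overline{G})$. Hence $\Phi(\varphi)$ is indeed a graph isomorphism between the vertex induced subgraphs $\overline{G}[X]$ and $\overline{G}[Y]$ of $\overline{G}$.

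Next I would verify that $\Phi$ is a semigroup homomorphism. For $\varphi, \psi \in \Iisg(G)$, recall that $\psi \circ \varphi$ is obtained by restricting to $\varphi^{-1}(\Img(\varphi) \cap \Dom(\psi))$, and the intersection of two vertex induced subgraphs $G[A]$ and $G[B]$ equals $G[A \cap B]$. The crucial point is that the \emph{vertex set} of this intersection depends only on the vertex sets $A$ and $B$, not on whether we work in $G$ or $\overline{G}$. Consequently $\Phi(\psi) \circ \Phi(\varphi)$ has the same domain vertex set as $\Phi(\psi \circ \varphi)$, and since both maps agree on vertices with the composed bijection $\psi \circ \varphi$, they coincide as elements of $\Iisg(\overline{G})$.

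Finally I would construct the inverse. Since $\overline{\overline{G}} = G$, the analogous construction yields a map $\Psi \colon \Iisg(\overline{G}) \to \Iisg(G)$ which, by the same reasoning, is a semigroup homomorphism. The compositions $\Psi \circ \Phi$ and $\Phi \circ \Psi$ act as the identity on underlying vertex bijections and hence are the identity morphisms, so $\Phi$ is a semigroup isomorphism. The main (mild) obstacle is simply the bookkeeping surrounding the composition rule; once one observes that composition in $\Iisg$ is governed entirely by vertex set intersections, which are insensitive to complementation, the result is essentially immediate.
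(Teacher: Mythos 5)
Your proposal is correct and follows essentially the same route as the paper: the paper's justification is precisely that isomorphisms of induced subgraphs of a simple graph preserve adjacency and non-adjacency, hence induce isomorphisms of the complementary induced subgraphs on the same vertex sets, with composition unaffected since it is governed by vertex-set intersections. Your write-up simply makes the well-definedness, homomorphism, and invertibility checks explicit, which the paper leaves as remarks.
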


\bibliographystyle{abbrv} 
\bibliography{GraphsandtheirISG}

\noindent T. Chih, Department of Sciences and Mathematics, Newberry College, Newberry, SC 29108, USA \\
\textit{E-mail}: \href{mailto:tien.chih@newberry.edu}{tien.chih@newberry.edu}\\ \\
\noindent D. Plessas, Department of Mathematics and Computer Science, Northeastern State University, Tahlequah, OK 74464, USA \\
\textit{E-mail}: \href{mailto:plessas@nsuok.edu}{plessas@nsuok.edu}\\

\end{document}